\title{A Linear-size Conversion of HCP to 3HCP}
\author{V. Ejov\thanks{Flinders University, Australia, vladimir.ejov@flinders.edu.au} \and M. Haythorpe\thanks{Flinders University, Australia, michael.haythorpe@flinders.edu.au} \and S. Rossomakhine\thanks{Flinders University,
Australia, serguei.rossomakhine@flinders.edu.au}}
\begin{document}

\maketitle
\begin{abstract}We provide an algorithm that converts any instance of the Hamiltonian cycle problem (HCP) into a cubic instance of HCP (3HCP), and prove that the input size of the new instance is only a linear function of that of the original instance. This is achieved by first considering various restrictions of HCP. Known conversions from directed HCP to undirected HCP, and sub-cubic HCP to cubic HCP are given. We introduce a subgraph called a 4-gate and show that it may be used to convert sub-quartic HCP into sub-cubic HCP. We further generalise this idea by first introducing the 5-gate, and then the $s$-gate for any $s \geq 4$. We prove that these subgraphs may be used to convert general instances of HCP into cubic HCP instances, where the input size of the converted instance is a quadratic function of that of the original instance. This result improves upon the previously best known approach which results in cubic growth in the size of the instance. We further prove that the quadratic function is reduced to a linear function if the maximum initial degree is bounded above by a constant. Motivated by this result, we describe an algorithm to convert general HCP to HCP of bounded degree and prove that it results in only linear growth. All of the above results are then used in the proof that any instance of HCP may be converted to an equivalent instance 3HCP with only linear growth in the input size.
\end{abstract}

\begin{keywords}Hamiltonian cycle problem, Cubic, Conversion, NP-Complete, Linear, Linearly-growing\end{keywords}

\begin{AMS}
05C45, 05C85, 68R10
\end{AMS}

\pagestyle{myheadings}
\thispagestyle{plain}

\section{Introduction}

The Hamiltonian cycle problem (HCP) is a famous graph theoretic decision problem that can be described simply: given a graph $\Gamma$, determine whether it contains any simple cycles containing all vertices in the graph, or
not. Although HCP has been studied for over 250 years (Euler was studying the Knights Tour Problem, a special case of HCP, as far back as 1759), it has seen a recent resurgence in interest due to its placement in the
NP-complete\footnote{For an in-depth study of NP-complete problems, the interested reader is referred to Garey and Johnson \cite{gareyjohnson}.} list of problems. Indeed, the reduction from Vertex Cover to HCP was among the
first NP-complete reductions published \cite{karp}.

Throughout this paper, when referring to HCP in its traditional form, we will use the expression {\em general HCP}. An {\em instance} of general HCP takes the form of a simple graph, which is defined by its vertices and (directed) edges. We refer to the sum of the number of vertices and the number of edges of an instance as the {\em input size} of the instance. So if a graph contains $N$ vertices and $e$ edges, the input size is $N + e$. However, any meaningful instance of HCP will have at least as many edges as vertices, or else it is trivially non-Hamiltonian. For this reason, when talking about the order of the input size, it suffices to merely consider the number of edges in the graph.

Although the definition of general HCP permits any simple graph as an instance, a natural idea is to adopt restricted definitions of HCP in which only graphs that satisfy certain properties are to be considered. For some such restrictions, HCP is known to remain NP-complete. The first of these was proved by Karp \cite{karp}, who showed that any general HCP instance can be converted to an equivalent instance which only contains undirected edges. The input size of the new instance is a linear function of that of the original instance. We will use the expression {\em undirected HCP} to describe the restricted version of HCP where only undirected instances are permitted. For the sake of completeness, we include the reduction here.

{\bf Converting General HCP to Undirected HCP \cite{karp}}

Suppose we have a graph containing $N$ vertices, which forms an instance of general HCP. We can produce a new graph containing $3N$ vertices, and add edges to it using the following algorithm.

{\underline{General HCP to Undirected HCP Conversion Procedure}}

\begin{itemize}\item Add edges $(3i-1,3i-2)$ and $(3i-1,3i)$ for all $i = 1, \hdots, N$.
\item For each (directed) edge $(i,j)$ in the original graph, add edge $(3i,3j-2)$.\end{itemize}

In the above procedure, a new graph instance is constructed from scratch. For convenience, however, for the remainder of this manuscript we will think of such procedures as having replaced certain components of a graph with new components, whose constructions depend upon the
components they are replacing. In the above procedure, each vertex is being replaced by three vertices, the second of which is linked to the other two by undirected edges. Then, for each vertex, incoming edges are replaced by undirected edges adjacent to the first vertex, and outgoing edges are replaced by undirected edges adjacent to the third vertex. This can be seen in Figure \ref{fig-directed}. It is also important to note here that if the original instance has maximum in-degree $r$ and maximum out-degree $s$, the new undirected instance will have maximum degree of $\max(r,s) + 1$.

\begin{figure}[h!]
\centering\includegraphics[scale=0.5]{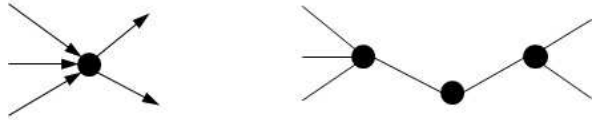}
\caption{A vertex with adjacent directed edges, and the corresponding undirected subgraph which replaces it.\label{fig-directed}}
\end{figure}

It is easy to see that the undirected HCP instance resulting from the General HCP to Undirected HCP Conversion Procedure is equivalent to the original instance. Consider one of the sets of three vertices, and call them $a$, $b$ and $c$. Suppose a Hamiltonian cycle visits vertex $a$. Vertex $a$ is adjacent to the degree 2 vertex $b$, so the latter must be visited immediately, and the cycle will then continue to vertex $c$. From there, the only edges present correspond to outgoing edges in the original graph, and each of these lead to the first vertex in another block of three vertices. Inductively, this process continues through the entire graph, and it is clear that a Hamiltonian cycle may only exist in the undirected instance if one existed in the original instance. Indeed, there is a 1-1 correspondence between Hamiltonian cycles in the original instance and Hamiltonian cycles in the equivalent undirected graph.

Throughout this manuscript, we will refer to conversions as being {\em polynomially-growing}, depending on the degree of the polynomial that describes the new input size as a function of the old input size. For example, the above
procedure describes a {\em linearly-growing conversion}, because the resultant graph has input size which is a linear function of the original input size. Specifically, in the above conversion, if the original graph had $N$ vertices and $e$ (directed) edges, the new undirected graph will contain $3N$ vertices, and $e + 2N$ undirected edges.

In addition to the conversion from general HCP to undirected HCP, it was proved by Garey et al \cite{tarjan} that even if HCP is restricted to only instances that are undirected, cubic, planar and 3-connected, the problem is still
NP-complete. Of course, this implies that one may restrict HCP instances to any subset of those four conditions and the problem will remain NP-complete. In this manuscript we will be primarily interested in cubic, undirected HCP
instances. We will refer to HCP restricted to such instances as {\em cubic HCP} (note that the undirected condition is implied in this title). Cubic HCP is a widely studied problem in its own right, with Barnette's
conjecture \cite{barnette} that every bipartite, planar, 3-connected cubic graph is Hamiltonian still an open conjecture. More recently, Eppstein \cite{eppstein} conjectured that undirected cubic graphs with $N$ vertices have at
most $2^{N/3}$ Hamiltonian cycles, with Gebauer \cite{gebauer} providing the best proven bound to date of approximately $1.276^N$. Eppstein \cite{eppstein} also provided an algorithm for finding Hamiltonian cycles in cubic graphs. Another open conjecture by Filar et al \cite{conjecturepaper} is that almost all non-Hamiltonian cubic graphs are bridge graphs, and may therefore be easily detected.

The result of Garey et al \cite{tarjan} proves it is possible to convert general HCP instances into cubic HCP instances, allowing us to take advantage of the special structure of cubic graphs. However, in practice, using their approach is inefficient, as the reduction in \cite{tarjan} is not from general HCP, but rather from boolean satisfiability (SAT) in conjunctive normal form with clauses of size 3 (3SAT). Currently, the best known conversion from general HCP to SAT is a cubically-growing conversion \cite{ariadne}, and the conversion of SAT to 3SAT is a linearly-growing conversion \cite{karp}. Converting from 3SAT to cubic, planar, 3-connected HCP using the method by Garey et al requires a quadratically-growing conversion; however, if we drop the requirement of planarity, the conversion reduces to linearly-growing. So, to convert a general HCP instance into a cubic HCP instance via the approach in \cite{tarjan}, we must first convert to SAT, then to 3SAT, and finally to cubic HCP, which results in a cubically-growing conversion.

In this manuscript, we describe a new approach to convert directly (that is, we remain within the scope of HCP for the entirety of the conversion) from general HCP to cubic HCP, using a linearly-growing conversion. We begin by revising the well-known conversion for sub-cubic (undirected) HCP to cubic HCP. We then give conversions, to sub-cubic HCP, for the special cases of undirected HCP where the maximum degree is 4 or 5. We follow this with a quadratically-growing conversion for general HCP to cubic HCP, and show that this approach constitutes a linearly-growing conversion if the maximum degree is bounded above by a constant. We also show that this approach can be applied to directed graphs, if desired. Finally we provide a linearly-growing conversion from general HCP to HCP with bounded in-degree and out-degree (for a sufficiently large upper bound), and using the previous results we conclude that general HCP may be reduced to cubic HCP via a linearly-growing conversion. We conclude with some examples of the savings to be gained by using this approach, compared with the approach in \cite{tarjan}.

\section{Converting Sub-cubic HCP to Cubic HCP} 

Consider an undirected graph with maximum degree 3. We refer to HCP restricted to such instances as {\em sub-cubic HCP}. Then there is a simple procedure to convert sub-cubic HCP to cubic HCP.

{\underline{Sub-cubic HCP to Cubic HCP Conversion Procedure}}

\begin{itemize}\item If the sub-cubic instance has any degree 1 vertices, the graph is non-Hamiltonian, and may be replaced by any non-Hamiltonian cubic graph (such as the Petersen graph \cite{holton}).
\item Otherwise, replace any degree 2 vertices with a diamond subgraph, as shown in Figure \ref{fig-degree2}.\end{itemize}

\begin{figure}[h!]
\centering\includegraphics[scale=0.5]{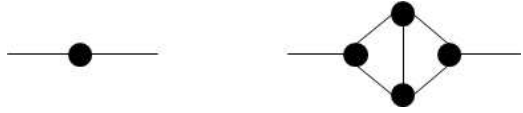}
\caption{A degree 2 vertex, and the corresponding cubic subgraph which replaces it.\label{fig-degree2}}
\end{figure}

It is clear that the resultant graph is cubic. To see that the Hamiltonicity has not been altered, it suffices to recognise that an introduced diamond, once entered, must be fully traversed before departing, as it will be
impossible to enter it again. Then this subgraph functions exactly the same as the vertex it replaced, and therefore the new cubic instance contains Hamiltonian cycles if and only if the original instance did.

It is also clear that the above procedure constitutes a linearly-growing conversion. Even in the worst case, where all vertices are of degree 2, the number of vertices is only quadrupled and the number of edges is only sextupled.

It is worth noting that the diamond subgraphs used in the above conversion can be traversed in either of two different ways. Therefore, there is not a 1-1 relationship between the Hamiltonian cycles in the sub-cubic instance, and the Hamiltonian cycles in the converted instance. Indeed, if the sub-cubic instance contains $k$ vertices of degree 2, then for each Hamiltonian cycle in the sub-cubic instance there are $2^k$ different Hamiltonian cycles in the
cubic instance. The 1-many relationship is unfortunately unavoidable in general, as it is a known result that undirected cubic Hamiltonian graphs must contain at least three Hamiltonian cycles \cite{tutte}, but sub-cubic graphs may contain any number of Hamiltonian cycles, i.e. 1 or 2 in particular.


\section{Undirected Graphs with Maximum Degree 4 and 5}

Consider an undirected graph with maximum degree 4. We refer to HCP restricted to such instances as {\em sub-quartic HCP}. Then there is a simple procedure to convert sub-quartic HCP to sub-cubic HCP. First we define a subgraph
which we call a {\em 4-gate}, as displayed in Figure \ref{fig-4gate}. Note that there are four edges, indicated by dashed lines in Figure \ref{fig-4gate}, by which the 4-gate may be entered or exited. We will refer to these four
edges as {\em external edges}.

\begin{figure}[h!]
\centering\includegraphics[scale=0.625]{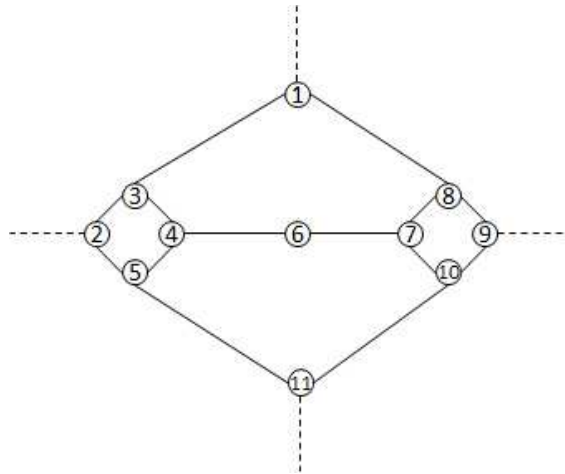}
\caption{A 4-gate, with the dashed lines representing the four external edges.\label{fig-4gate}}
\end{figure}

\begin{lemma}It is possible to enter the 4-gate via any of the external edges, and exit via any of the remaining external edges, visiting every vertex exactly once.\label{lem-4g}\end{lemma}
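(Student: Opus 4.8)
The plan is to verify the claim by a direct, constructive case analysis. Since the 4-gate has four external edges, there are exactly $\binom{4}{2} = 6$ unordered pairs consisting of an entry edge and a distinct exit edge, and for each such pair I must exhibit a single path that begins on the chosen entry edge, ends on the chosen exit edge, and passes through every internal vertex of the gate exactly once. Because the lemma asserts only that such a traversal is \emph{possible}, no impossibility argument is needed: it suffices to display one valid interior Hamiltonian path for each of the six admissible pairs.

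First I would fix a labelling of the internal vertices of the 4-gate (as drawn in Figure \ref{fig-4gate}) and of the four external edges, recording for each internal vertex its list of neighbours. Next I would identify the automorphism group of the gadget and use it to collapse the six pairs into a small number of equivalence classes: if two pairs of external edges are related by an automorphism of the gate, then relabelling a traversal for the first pair by that automorphism produces a valid traversal for the second, so only one representative of each class need be checked. Since the 4-gate is constructed to behave symmetrically with respect to its external edges, I expect this to reduce the work to just one or two representative cases rather than all six.

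For each representative pair I would then write down an explicit ordering of the internal vertices, forming a path from the endpoint of the entry edge to the endpoint of the exit edge, and confirm two things against the adjacency list: that consecutive vertices in the ordering are joined by an edge of the gate, and that the ordering lists every internal vertex exactly once. The main obstacle here is bookkeeping rather than conceptual difficulty — I must be certain that the automorphism group I use is correct and large enough that the chosen representatives genuinely cover all six pairs, so that no combination of entry and exit edges is silently omitted. Provided the gadget has the intended symmetry, exhibiting this handful of explicit paths and checking their validity completes the proof.
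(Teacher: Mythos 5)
Your proposal takes essentially the same approach as the paper: the paper likewise reduces the six unordered entry/exit pairs using the gate's symmetries (top--bottom and left--right are equivalent), leaving three representative cases, and then exhibits an explicit Hamiltonian path through all eleven internal vertices for each. The only minor discrepancy is your guess of ``one or two'' representative cases --- the symmetry orbits in fact number three --- but the orbit computation you describe would reveal this, and the rest is exactly the bookkeeping you outline.
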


\begin{proof}It suffices to give paths between any two of the external edges. Since the 4-gate is undirected, the reverse path is obviously permitted, so we only need to consider unordered pairs. Also, due to symmetry,
the top and bottom edges are equivalent, as are the left and right edges. Then there are only three cases that need to be considered, which are displayed in Figure \ref{fig-4gate_paths}.

Top edge to left edge: The path is $1 - 3 - 4 - 6 - 7 - 8 - 9 - 10 - 11 - 5 - 2$.\\
Top edge to bottom edge: The path is $1 - 3 - 2 - 5 - 4 - 6 - 7 - 8 - 9 - 10 - 11$.\\
Left edge to right edge: The path is $2 - 3 - 1 - 8 - 7 - 6 - 4 - 5 - 11 - 10 - 9$. \end{proof}

\begin{figure}[h!]
\centering\hspace*{-0.5cm}\includegraphics[scale=0.4]{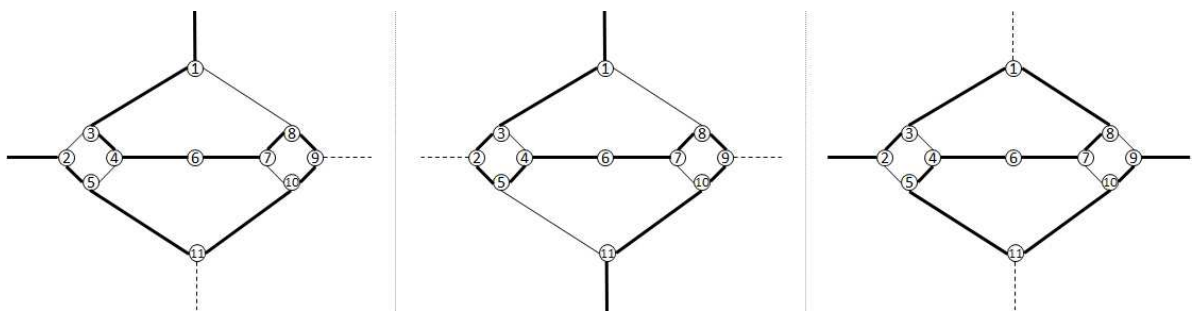}
\caption{The three paths through the 4-gate described in Lemma \ref{lem-4g}, displayed here as bold edges.\label{fig-4gate_paths}}
\end{figure}

In the following proof, and throughout the remainder of this manuscript, we make use of the concept of a {\em live edge}, being one that may still be used without creating a short cycle. In general, if a vertex $v$ has only two
live edges, and an adjacent vertex is visited, then vertex $v$ must be visited immediately afterwards in order for a Hamiltonian cycle to be formed.

\begin{proposition}Upon entering the 4-gate, every vertex must be traversed before exiting in a Hamiltonian cycle.\label{prop-4g}\end{proposition}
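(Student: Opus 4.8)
The plan is to show that a Hamiltonian cycle of the host graph can only meet the 4-gate in a single spanning sub-path, so that once the cycle enters it is forced to visit all eleven internal vertices before it can leave. First I would note that the four external edges are the only connections between the gate and the rest of the graph, so the restriction of any Hamiltonian cycle to the gate is a vertex-disjoint union of paths whose endpoints are exactly the attachment vertices at which an external edge is traversed. The number of external edges used is therefore even, hence $0$, $2$, or $4$; the value $0$ is impossible, since it would close the gate vertices into a cycle disjoint from the remaining (strictly more numerous) vertices of the host graph. So either exactly two external edges are used, giving one path from the entry vertex to the exit vertex, or all four are used.

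The structural fact that makes the argument work is that, in the labelling of Lemma \ref{lem-4g}, the gate has three vertices of degree $2$, namely $6$, $7$, and $10$. Both edges at each of these vertices are \emph{live} and must lie on every Hamiltonian cycle, which forces the sub-paths $4 - 6 - 7 - 8$ and $9 - 10 - 11$ into the cycle outright. In the two-external-edge case the gate-portion of the cycle is a single path between two attachment vertices; since every internal vertex must be visited and every non-attachment vertex has all its edges inside the gate, this path is necessarily spanning, which is exactly the behaviour claimed. The entire proposition thus reduces to excluding the four-external-edge case.

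To eliminate that case I would run a short forcing cascade from the assumption that all four external edges lie on the cycle. The external edge at $9$ together with the forced edge $9 - 10$ saturates vertex $9$ and so discards $8 - 9$; the forced edge $7 - 8$ then compels $1 - 8$, saturating vertex $1$ and discarding $1 - 3$. Symmetrically, the external edge at $11$ with the forced edge $10 - 11$ discards $5 - 11$, which at vertex $5$ compels both $2 - 5$ and $4 - 5$; these saturate vertices $2$ and $4$ and discard $2 - 3$ and $3 - 4$. At this point all three edges incident to vertex $3$ have been discarded, so vertex $3$ cannot be visited, contradicting Hamiltonicity. Hence exactly two external edges are used and the traversal is complete.

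I expect the only real work to be bookkeeping: pinning down the forced edges issuing from the degree-$2$ vertices and propagating the saturation and elimination conditions in an order that ultimately isolates vertex $3$. No case analysis over the three possible pairings of the four attachment vertices is needed, because the degree-$2$ vertices make the cascade deterministic; the one point to check carefully at the outset is that $6$, $7$, and $10$ genuinely have degree $2$ in the gate, since it is this property that renders their edges live and rigidifies the two sub-paths on which the whole argument rests.
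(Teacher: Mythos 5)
Your overall framing is sound --- parity of boundary crossings, the observation that the two-external-edge case is automatic, and the reduction of the whole proposition to excluding the four-external-edge configuration --- but the proof breaks on a structural error: the 4-gate of Figure \ref{fig-4gate} contains the edge $(7,10)$. This edge is invoked explicitly in the paper's own proof of Proposition \ref{prop-4g} (``since edges $(7,10)$ and $(9,10)$ were not used, it is now impossible to visit vertex 10''); it happens not to appear in any of the three paths of Lemma \ref{lem-4g}, which is presumably why your reconstruction of the gate missed it. Consequently vertices 7 and 10 have degree 3, not 2; the only degree-2 vertex in the gate is vertex 6. The sub-paths $4 - 6 - 7 - 8$ and $9 - 10 - 11$ that you declare forced outright are therefore not forced (only $4 - 6 - 7$ is), and your cascade fails at its first step: the external edge at vertex 9 does not discard $(8,9)$, because $(9,10)$ need not lie on the cycle. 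You yourself flagged the degrees of 6, 7 and 10 as the one point to check carefully, and that check fails: as written, your argument proves the proposition for a different, 13-edge graph rather than for the paper's 14-edge gate.

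The damage is repairable without abandoning your strategy. In the four-external-edge case, vertex 6 still forces $(4,6)$ and $(6,7)$, so vertex 7 uses exactly one of $(7,8)$, $(7,10)$; split on that choice. If $(7,8)$ is used, then vertex 10 must use $(9,10)$ and $(10,11)$, and your original cascade runs verbatim. If instead $(7,10)$ is used, then vertex 8 must use $(1,8)$ and $(8,9)$, saturating vertices 1 and 9 and discarding $(1,3)$ and $(9,10)$; vertex 10 then forces $(10,11)$, saturating 11 and discarding $(5,11)$, so vertex 5 forces $(2,5)$ and $(4,5)$, saturating 2 and 4. In either branch the edges $(1,3)$, $(2,3)$ and $(3,4)$ are all discarded, so vertex 3 is stranded --- the same contradiction. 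With this two-case split your argument becomes correct, and it is genuinely different from (and tidier than) the paper's proof, which instead assumes without loss of generality that the top edge is entered and traces the possible continuations path by path; your version needs no symmetry reductions and no pairing analysis of which external edges belong to which path. But in its present form the proposal has a real gap, since the forcing that drives the entire cascade rests on degrees the gate's vertices do not have.
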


\begin{proof}Suppose that during the course of a Hamiltonian cycle, the 4-gate is entered, and then exited before all vertices are visited. Then the Hamiltonian cycle must later enter and exit the 4-gate again. Therefore, one such
path must enter or exit through the top edge. Since the graph is undirected, without loss of generality we may assume that the top edge is entered. Then, suppose the cycle travels from vertex 1 to 3. At this point,
the cycle may either continue to vertex 2 or 4.

If the cycle continues to vertex 4, it must then continue to the degree 2 vertex 6, and on to vertex 7. Then, since edge $(1,8)$ was not used, there are only two remaining live edges adjacent to vertex 8, so the cycle must continue
to vertex 8 and through to vertex 9. At this point it must exit the right edge (or else it will be impossible to re-enter and re-exit the 4-gate later). Clearly then, since edges $(7,10)$ and $(9,10)$ were not used, it is now impossible to
visit vertex 10 without getting stuck.

If, instead, the cycle continues to vertex 2, then using the same argument as above, it must exit via the left edge. Then, some time later, the cycle re-enters the 4-gate. Again, without loss of generality, suppose it enters via the right edge. Then using the same
argument as above, the cycle is forced to travel the path $9 - 8 - 7 - 6 - 4 - 5 - 11$. At this stage, the cycle must exit via the bottom edge, as it is the only remaining external edge. However, it is then impossible to visit
vertex 10. So we conclude that the initial choice of travelling from vertex 1 to 3 is flawed.

However, due to symmetry, travelling from vertex 1 to 8 will be similarly flawed. Therefore the initial assumption that the 4-gate was exited before all vertices was visited must be incorrect. \end{proof}

From Lemma \ref{lem-4g} and Proposition \ref{prop-4g} we see that the 4-gate functions the same as a degree 4 vertex. That is, once it is entered via one edge, any of the other three edges can be departed from, but only once the entire
4-gate has been traversed. Then, the procedure to convert a sub-quartic instance to a sub-cubic instance is as follows.

{\underline{Sub-quartic HCP to Sub-cubic HCP Conversion Procedure}}

\begin{itemize}\item Replace any degree 4 vertices with a 4-gate, with the four adjacent edges to the degree 4 vertex forming the four external edges to the 4-gate.\end{itemize}

Since the 4-gate subgraphs are sub-cubic, and all remaining vertices in the original instance are degree 3 or less, the resulting instance is now sub-cubic. It is clear that the conversion from sub-quartic HCP to sub-cubic HCP is
a linearly-growing conversion, where in the worst case there are 11 times as many vertices, and 4.5 times as many edges. Then, since there is a linearly-growing conversion from sub-cubic HCP to cubic HCP, we conclude that there is
a linearly-growing conversion from sub-quartic HCP to cubic HCP.

We further extend the above idea by introducing the {\em 5-gate}, displayed in Figure \ref{fig-5gate}.

\begin{figure}[h!]
\centering\hspace*{-0.5cm}\includegraphics[scale=0.75]{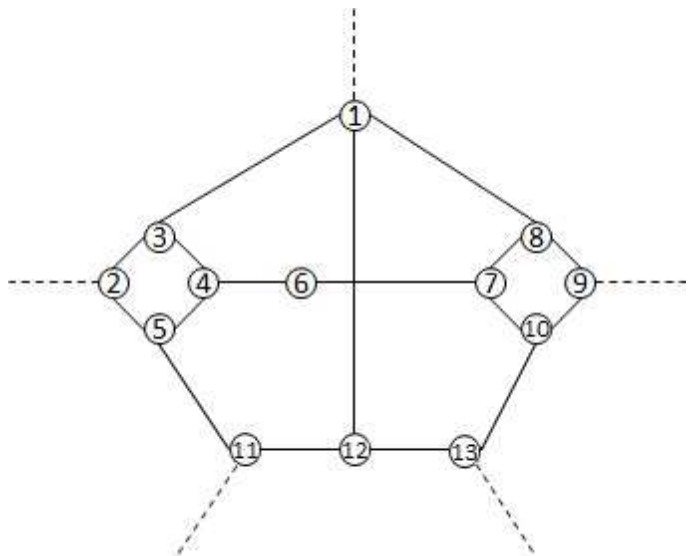}
\caption{A 5-gate, with the dashed lines representing the five external edges.\label{fig-5gate}}
\end{figure}

\begin{lemma}It is possible to enter the 5-gate via any of the external edges, and exit via any of the remaining external edges, visiting every vertex exactly once.\label{lem-5g}\end{lemma}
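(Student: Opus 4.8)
The plan is to follow exactly the same strategy that succeeded for the 4-gate in Lemma \ref{lem-4g}, namely to exhibit, for every unordered pair of external edges, an explicit Hamiltonian path through the 5-gate that enters via one edge of the pair and exits via the other while visiting each internal vertex precisely once. Since the 5-gate is undirected, any path I write down may be traversed in reverse, so it suffices to treat unordered pairs and thereby cut the bookkeeping in half. The key organising observation will be symmetry: just as the 4-gate's top/bottom and left/right edges were interchangeable, I expect the 5-gate in Figure \ref{fig-5gate} to carry a symmetry group acting on its five external edges (most likely a rotational or reflective symmetry), so that the $\binom{5}{2}=10$ a priori distinct pairs collapse into only a handful of genuinely inequivalent cases.

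First I would read off the symmetry group of the 5-gate from its figure and use it to partition the ten unordered pairs of external edges into orbits; I would then select one representative pair from each orbit. Second, for each representative I would write down a concrete vertex sequence constituting a Hamiltonian path of the gate that starts on the chosen entry edge and terminates on the chosen exit edge, in the same display style used in the proof of Lemma \ref{lem-4g} (a dash-separated list of vertex labels). Third, for each such sequence I would verify two routine facts: that it is a genuine path in the gate (consecutive labels are adjacent) and that it lists every internal vertex exactly once. Finally, appealing to symmetry and to the undirected reversal of paths, I would conclude that a suitable path exists between every ordered pair of distinct external edges, which is precisely the claim.

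The main obstacle will be the case analysis itself rather than any conceptual difficulty: the 5-gate is a larger and less symmetric object than the 4-gate, so I must be careful that the symmetry reduction I claim is actually valid and that the chosen representatives genuinely cover all orbits. In particular, I expect that not all five external edges will be equivalent under the symmetry group; there may be two symmetry classes of edges (for instance an ``apex'' edge distinguished from the others), which would force me to handle several structurally different cases — entry and exit within one class, entry and exit across the two classes, and so on — each requiring its own explicit path. The delicate part is simply ensuring that every one of these explicit sequences is a valid Hamiltonian path; this is a finite verification with no hidden subtlety, but with a gate of this size it is error-prone, so I would double-check each path's adjacencies and its completeness before declaring the lemma proved. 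Once all representative paths are in hand, the conclusion is immediate.
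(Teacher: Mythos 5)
Your proposal takes essentially the same approach as the paper's own proof: the paper likewise uses the reflective symmetry of the 5-gate (which fixes the top external edge and swaps left with right and bottom-left with bottom-right) to reduce the ten unordered pairs of external edges to six representative cases, and then exhibits an explicit Hamiltonian path for each, just as in Lemma \ref{lem-4g}. The only part not carried out in your proposal is the finite verification itself --- the six explicit vertex sequences, which you could not write down without the figure --- and that is precisely the entire content of the paper's proof, executed exactly as you describe.
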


\begin{proof}Again, it suffices to give the paths between any pair of external edges. Due to symmetry we can eliminate some options, leaving only the following six paths, which are displayed in Figure \ref{fig-5gate_paths}.

Top edge to left edge: The path is $1 - 3 - 4 - 6 - 7 - 8 - 9 - 10 - 13 - 12 - 11 - 5 - 2$.\\
Top edge to bottom-left edge: The path is $1 - 3 - 2 - 5 - 4 - 6 - 7 - 8 - 9 - 10 - 13 - 12 - 11$.\\
Left edge to right edge: The path is $2 - 3 - 1 - 8 - 7 - 6 - 4 - 5 - 11 - 12 - 13 - 10 - 9$.\\
Left edge to bottom-left edge: The path is $2 - 3 - 1 - 12 - 13 - 10 - 9 - 8 - 7 - 6 - 4 - 5 - 11$.\\
Left edge to bottom-right edge: The path is $2 - 3 - 1 - 12 - 11 - 5 - 4 - 6 - 7 - 8 - 9 - 10 - 13$.\\
Bottom-left edge to bottom-right edge: The path is $11 - 5 - 2 - 3 - 4 - 6 - 7 - 10 - 9 - 8 - 1 - 12 - 13$. \end{proof}

\begin{figure}[h!]
\centering\hspace*{-0.5cm}\includegraphics[scale=0.4]{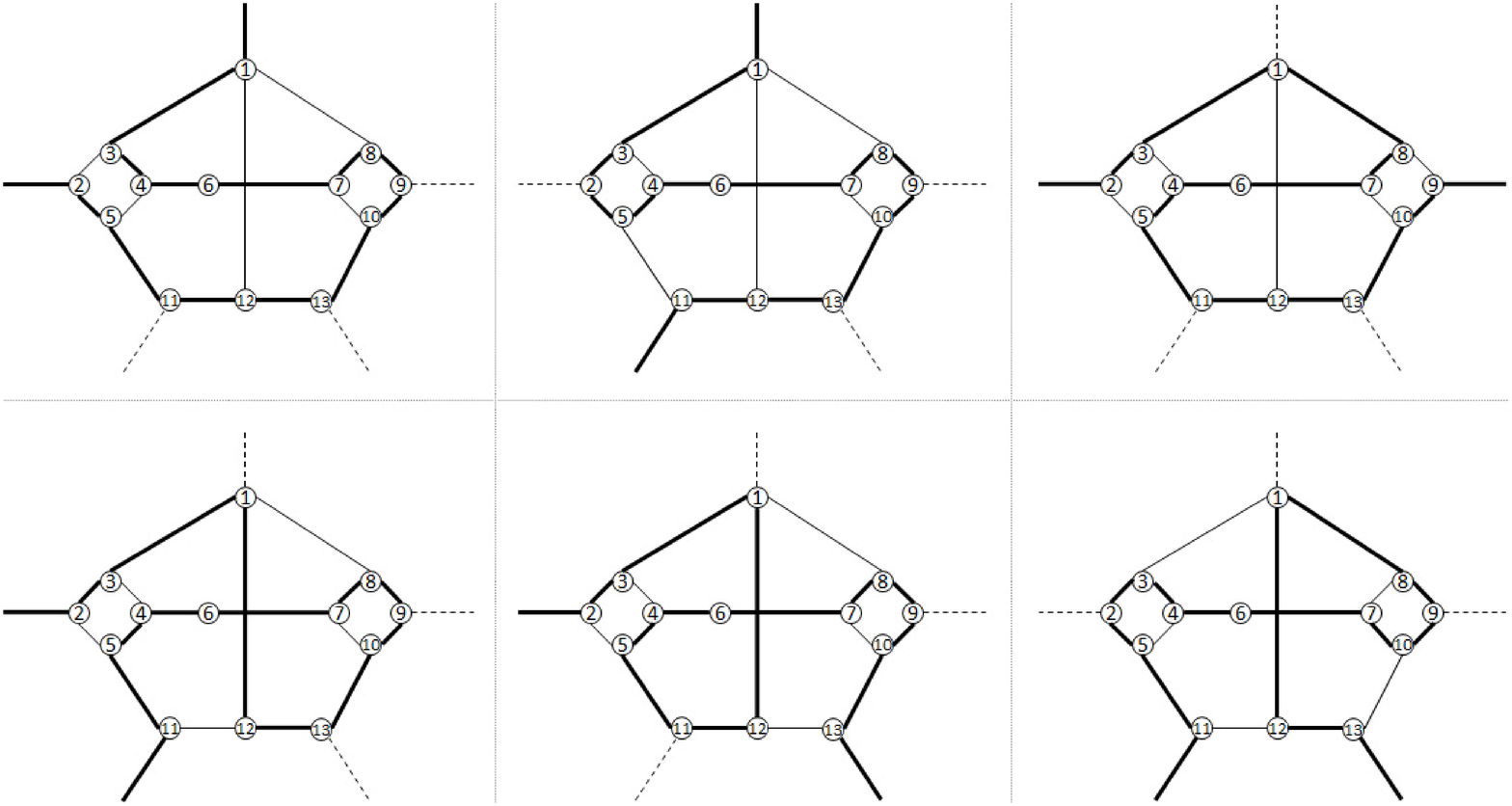}
\caption{The six paths through the 5-gate described in Lemma \ref{lem-5g}, displayed here as bold edges.\label{fig-5gate_paths}}
\end{figure}

\begin{proposition}Upon entering the 5-gate, every vertex must be traversed before exiting in a Hamiltonian cycle.\label{prop-5g}\end{proposition}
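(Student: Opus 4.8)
The plan is to argue by contradiction, mirroring the proof of Proposition~\ref{prop-4g}. Suppose that in some Hamiltonian cycle the 5-gate is entered and then exited before all thirteen of its vertices have been visited. Since the cycle is a single closed curve and the gate is joined to the rest of the graph by exactly five external edges, the cycle meets the gate in an even number of external edges. If it used only two, the gate would be traversed as a single path, which would either cover every internal vertex (contradicting the assumption) or leave some internal vertex off the cycle (contradicting Hamiltonicity). Hence the assumption forces the cycle to use exactly four of the five external edges, so that the gate is traversed as two vertex-disjoint paths whose union covers all thirteen internal vertices, with precisely one external edge left unused. It is this two-path covering that I must rule out.

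First I would record the portion of any such traversal that is forced. The vertex labelled $6$ has internal degree two, so both $(4,6)$ and $(6,7)$ must lie on the cycle, and by the live-edge principle this constrains the admissible continuations at vertices $4$ and $7$. I would then exploit two reductions to keep the analysis finite: the freedom to reverse each path, the graph being undirected, and the natural reflection symmetry of the 5-gate, which fixes vertices $1$, $6$ and $12$ and interchanges the pairs $(2,9)$, $(3,8)$, $(4,7)$, $(5,10)$ and $(11,13)$ (equivalently, it swaps the left and right external edges and the two bottom external edges while fixing the top edge). Together these cut down both the choice of which four external edges are used and the way their endpoints are paired into two paths to a short list of essentially distinct configurations.

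For each surviving configuration I would propagate forced moves exactly as in Proposition~\ref{prop-4g}: starting from a chosen endpoint and repeatedly applying the rule that a vertex retaining only two live edges must be entered and left through them, I would trace each path until some vertex is stranded with a single live edge and can no longer be incorporated into the cycle. Exhibiting such a trapped vertex in every branch gives the contradiction, so that no two-path covering exists and the gate, once entered, must be fully traversed before it is exited. The main obstacle is bookkeeping rather than any single hard idea: the fifth external edge roughly doubles the number of cases relative to the 4-gate, and the crux is to organise them through the reflection symmetry and the forced edges $(4,6)$, $(6,7)$ so that the recurring obstruction---a vertex left with one live edge, playing the role that vertex $10$ did for the 4-gate---can be exhibited uniformly rather than rediscovered case by case.
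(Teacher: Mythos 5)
Your setup is sound, and it mirrors the paper's own framework: the parity argument forcing exactly four of the five external edges to be used (so that, under the contradiction hypothesis, the gate is covered by two vertex-disjoint paths), the forced edges $(4,6)$ and $(6,7)$ at the degree-2 vertex 6, and the reflection you describe (fixing $1$, $6$, $12$ and swapping $2\leftrightarrow 9$, $3\leftrightarrow 8$, $4\leftrightarrow 7$, $5\leftrightarrow 10$, $11\leftrightarrow 13$) is indeed an automorphism of the 5-gate --- one the paper invokes only implicitly, so making it explicit is a genuine improvement in presentation. The problem is that everything after this point is a promise rather than a proof. ``Propagate forced moves\dots exhibiting such a trapped vertex in every branch gives the contradiction'' is not an argument; it is a restatement of the proposition. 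The paper's proof \emph{is} the case analysis you defer: it traces eight concrete configurations (four arising from the top edge being entered followed by $1 - 3$, shown in Figure \ref{fig-5gate_proof1}, and four covering the remaining possibilities at vertex 1, shown in Figure \ref{fig-5gate_proof2}), and terminates each with a specific obstruction. The truth of the proposition rests entirely on the outcome of that finite check --- a slightly different gadget with the same symmetry and the same degree-2 vertex could well admit a two-path cover --- so the check cannot be inferred from the setup; it must be exhibited.

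A second, smaller point: your hope that the obstruction can be ``exhibited uniformly rather than rediscovered case by case,'' with a single vertex playing the role that vertex 10 played in the 4-gate, is not borne out. In the paper's branches the contradiction arrives in genuinely different forms: sometimes vertex 10 is stranded, sometimes vertex 12 or vertex 5, sometimes too few unused external edges remain for the second required entry and exit, and in one branch a premature subcycle closes. Your symmetry and reversal reductions do roughly halve the work (and your organisation by which four external edges are used and how their endpoints pair is a perfectly valid alternative to the paper's organisation by the edges used at vertex 1), but each surviving branch still needs its own live-edge trace to its own dead end. Until those traces are written down, the proof is incomplete.
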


\begin{proof}Suppose that, during the course of a Hamiltonian cycle, the 5-gate is entered and then exited before all vertices are visited. Then the Hamiltonian cycle must later enter and exit the 5-gate again. Therefore all but
one external edge is traversed at some point. Suppose that the top external edge is one of the external edges used. Without loss of generality, assume that the top external edge is entered. Then, suppose that the cycle continues
from vertex 1 to 3. The possible situations that arise from this choice will be displayed in Figure \ref{fig-5gate_proof1}. Note that, due to symmetry, if this choice proves impossible, then it will also be impossible to travel to
vertex 8 as well.

At this point the path is $1 - 3$. Then, suppose the cycle continues to vertex 4. Using similar arguments as in the proof of Proposition \ref{prop-4g}, we see that the cycle must then continue through vertices $4 - 6 - 7 - 8 - 9$,
since vertices 4 and 8 both have only two live remaining edges. However, now vertex 10 has only two remaining live edges, and so the Hamiltonian cycle continues through vertices $9 - 10 - 13$. At this point the bottom-right external edge must be used, or else there will not be enough remaining external edges to re-enter and re-exit later. However, it is then impossible to visit vertex 12 without getting stuck. This situation is displayed in the first part of Figure \ref{fig-5gate_proof1}. Therefore we backtrack and choose to travel from vertex 3 to 2, instead of from vertex 3 to 4.

At this point the path is $1 - 3 - 2$. The cycle may now either exit the 5-gate here, or continue. Suppose that it exits here. Then consider the possibility that the right external edge is later used. Without loss of
generality, assume it is re-entered. Using the same arguments as above, we must travel along the path $9 - 8 - 7 - 6 - 4$, since vertices 8 and 6 have only two live edges at this point. However, it is then impossible to visit
vertex 10 without getting stuck. This situation is displayed in the second part of Figure \ref{fig-5gate_proof1}. So therefore, the right external edge cannot be re-entered or re-exited, and so the bottom-left and bottom-right external edges must both be used. Without loss of generality, assume the cycle re-enters
via the bottom-left external edge. Then, since vertex 12 has only two remaining live edges, it must be visited, and then the path continues to vertex 13 where it immediately departs. Then there are remaining vertices in the 5-gate
that can not be visited without getting stuck, as there is only one remaining external edge. Therefore this choice is impossible. This situation is displayed in the third part of Figure \ref{fig-5gate_proof1}. Then, we see that the
cycle must not exit via the left external edge, and must instead travel from vertex 2 to vertex 5.

At this point the path is $1 - 3 - 2 - 5$. Then there are only two live edges adjacent to vertices 4, 6 and 8, and so we are forced to continue through $4 - 6 - 7 - 8 - 9$. Then we must exit, to ensure there
are still two live external edges to re-enter and re-exit, but this means it is impossible to visit vertex 10 without getting stuck. This situation is displayed in the fourth part of Figure \ref{fig-5gate_proof1}.

\begin{figure}[h!]
\centering\hspace*{-0.5cm}\includegraphics[scale=0.6]{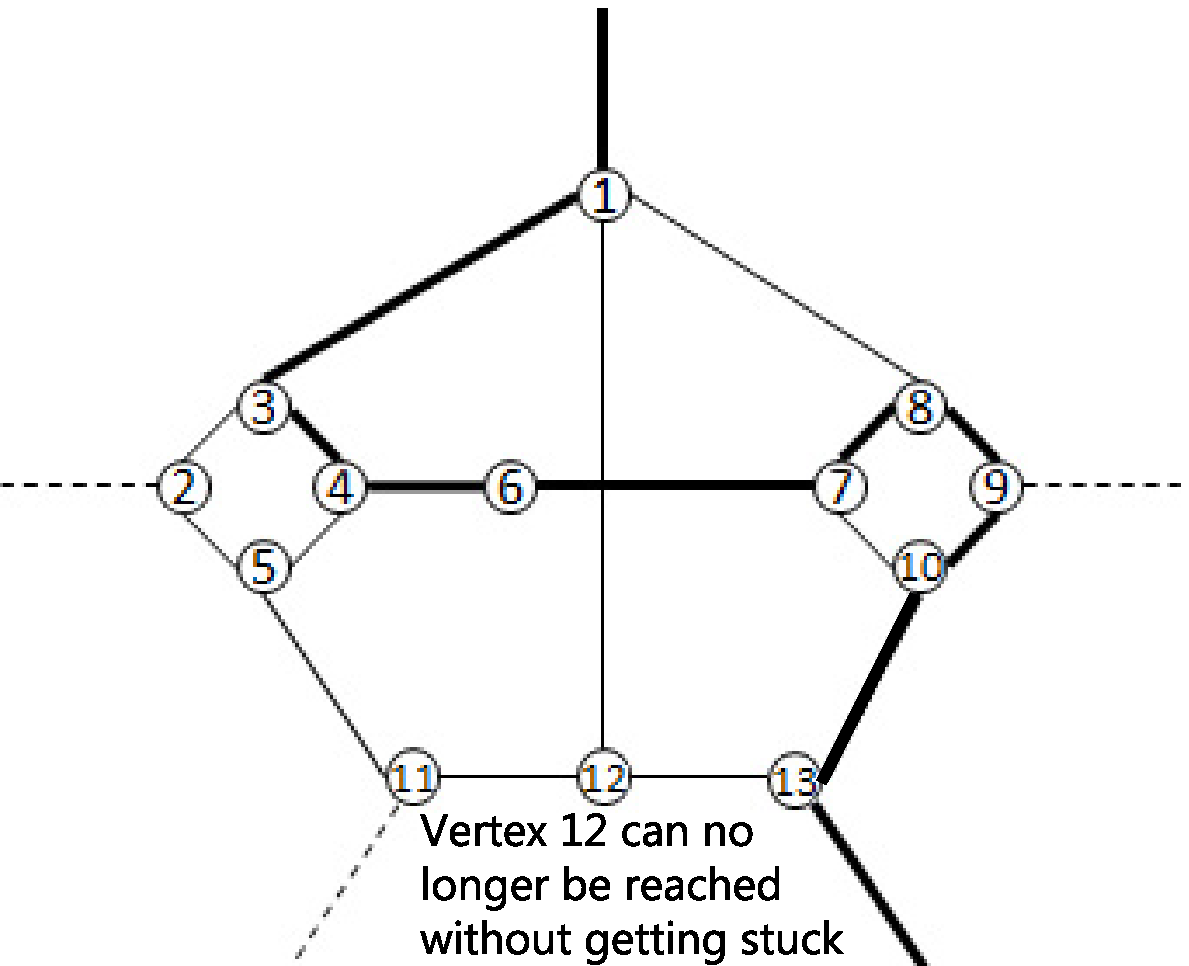} \;\;\;\;\; \includegraphics[scale=0.6]{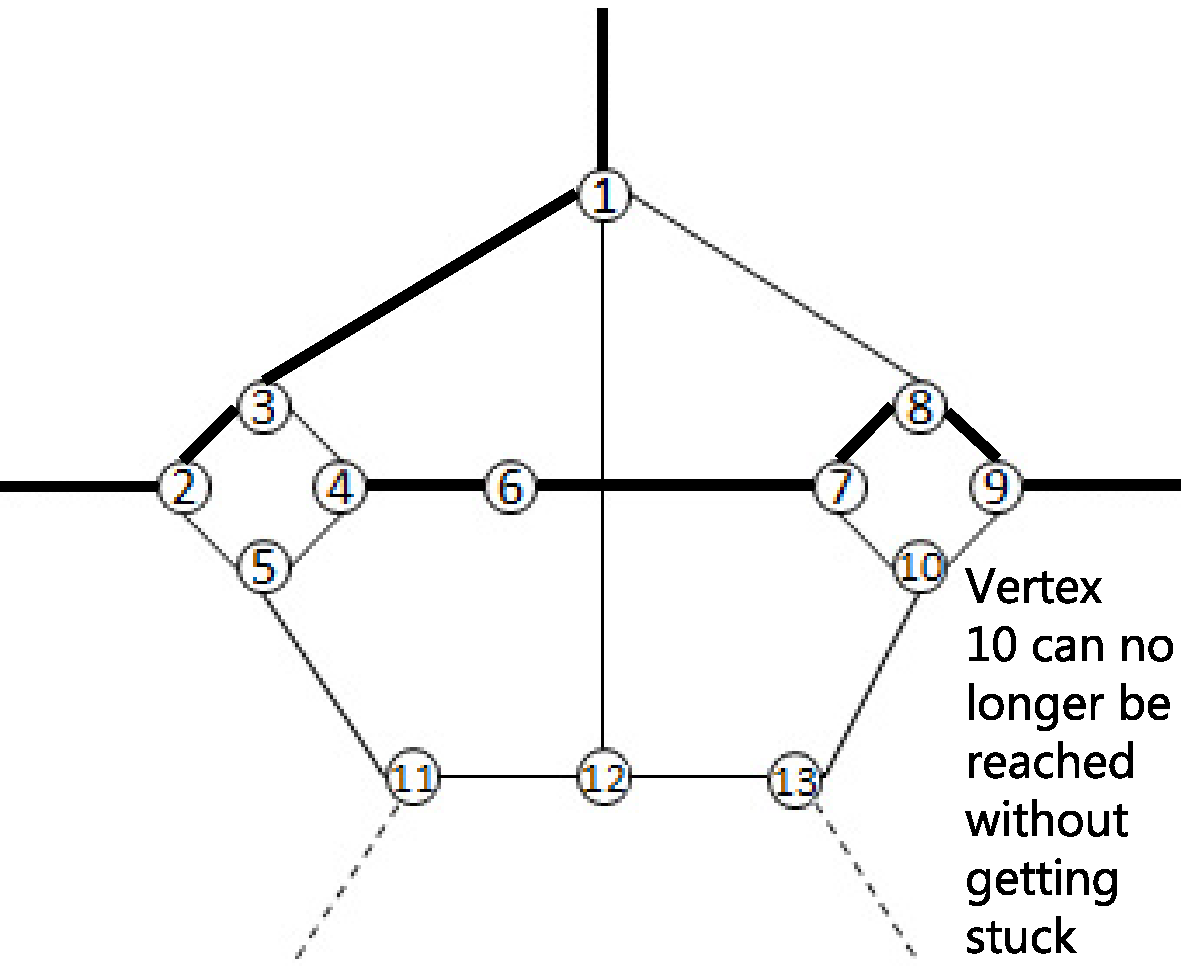}\\
\vspace*{0.67cm}
\hspace*{-0.5cm}\includegraphics[scale=0.6]{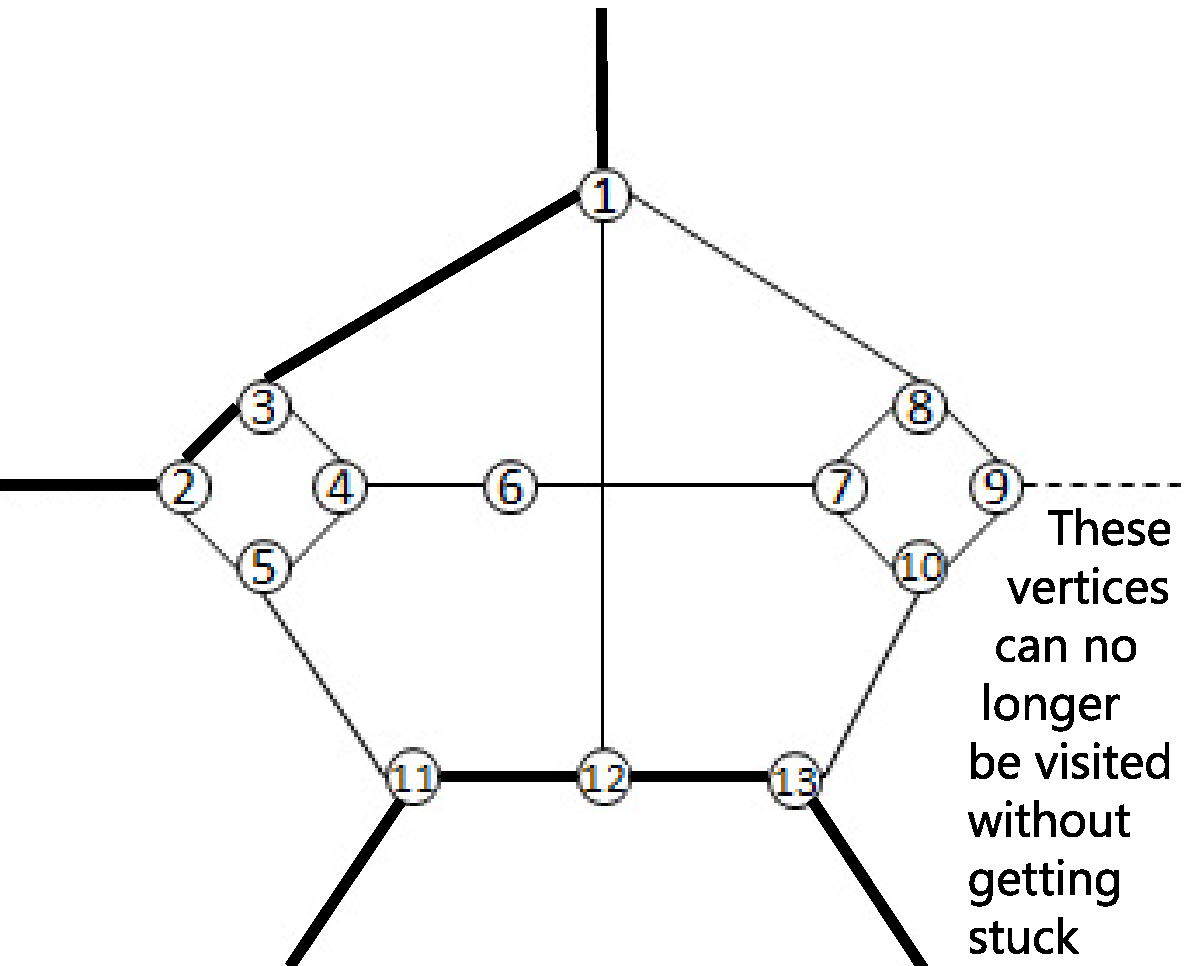} \;\;\;\;\; \includegraphics[scale=0.6]{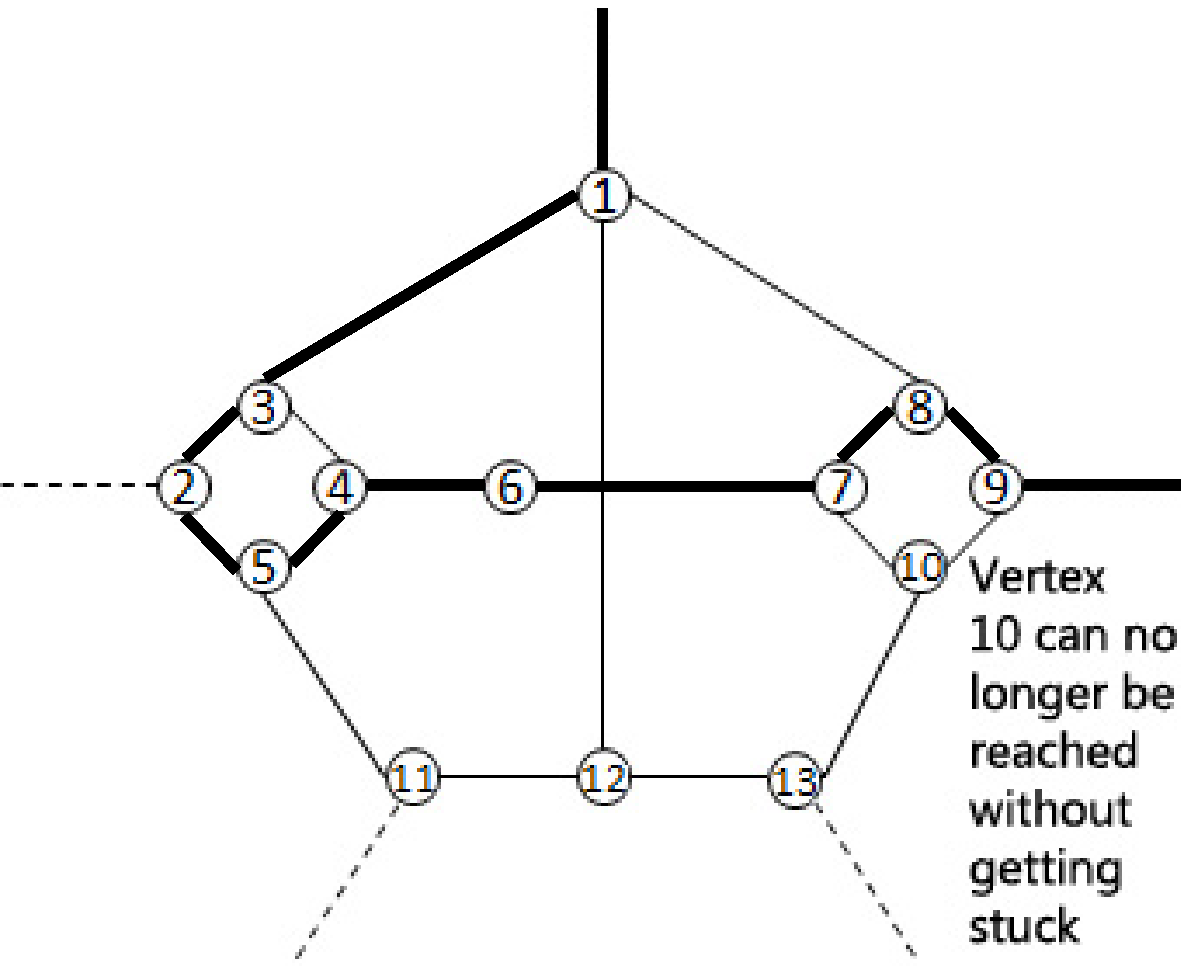}
\caption{The four possible situations that arise from the top external edge being entered, and edge $(1,3)$ being chosen. The paths taken are indicated by bold edges.\label{fig-5gate_proof1}}
\end{figure}

Therefore all possible situations that arise from entering via the top external edge and choosing the path beginning with $1 - 3$ are flawed, so we must backtrack all the way to the start. The remaining situations will be displayed
in Figure \ref{fig-5gate_proof2}. Recall that the top external edge was entered. The only remaining option to check is if we travel from vertex 1 to vertex 12. Then it is clear that vertices 3, 6 and 8 have only two live edges
remaining. That means at some point there will be a path going from $2 - 3 - 4 - 6 - 7 - 8 - 9$. At this point, we see that vertices 5 and 10 now also have only two live edges remaining, and so must be visited on this path, but this
means that neither the right nor left external edges are used. Then it is impossible to re-enter and re-exit later, so this choice is flawed as well. This situation is displayed in the first part of Figure \ref{fig-5gate_proof2}.

Having checked all possible options after entering via the top external edge, we conclude that it may not be used to enter the 5-gate. Without loss of generality, this also means it may not be exited. Consider the remaining
possibility, where the top external edge is not used as an entering or exiting external edge. Suppose that edges $(1,3)$ and $(1,8)$ are used in a path through the 5-gate. It is clear that vertex 12 has only two remaining live edges,
and since all external edges other than the top external edge must be used, one path through the 5-gate must be $11 - 12 - 13$. Then consider the path containing vertex 8. If the path continues to vertex 7, there are two adjacent
vertices, 6 and 10, each with two remaining live edges, so it is impossible to complete a Hamiltonian cycle. This situation is displayed in the second part of Figure \ref{fig-5gate_proof2}. Alternatively, if the path goes from vertex
8 to 9, it must then depart via the right external edge, but it is then impossible to reach vertex 10 without getting stuck. This situation is displayed in the third part of Figure \ref{fig-5gate_proof2}. Therefore this situation is
impossible.

Due to symmetry, we need only consider one remaining possibility. Suppose edges $(1,3)$ and $(1,12)$ are used in a path through the 5-gate. Then, the right external edge must be entered or exited at some stage.
Without loss of generality, suppose it is entered. Since vertices 8 and 6 have only two remaining live edges at this point, the path continues through $9 - 8 - 7 - 6 - 4$. However, it is then impossible to visit vertex 10 without
getting stuck, and this situation is impossible as well. This situation is displayed in the fourth part of Figure \ref{fig-5gate_proof2}.

\begin{figure}[h!]
\centering\hspace*{-0.5cm}\includegraphics[scale=0.6]{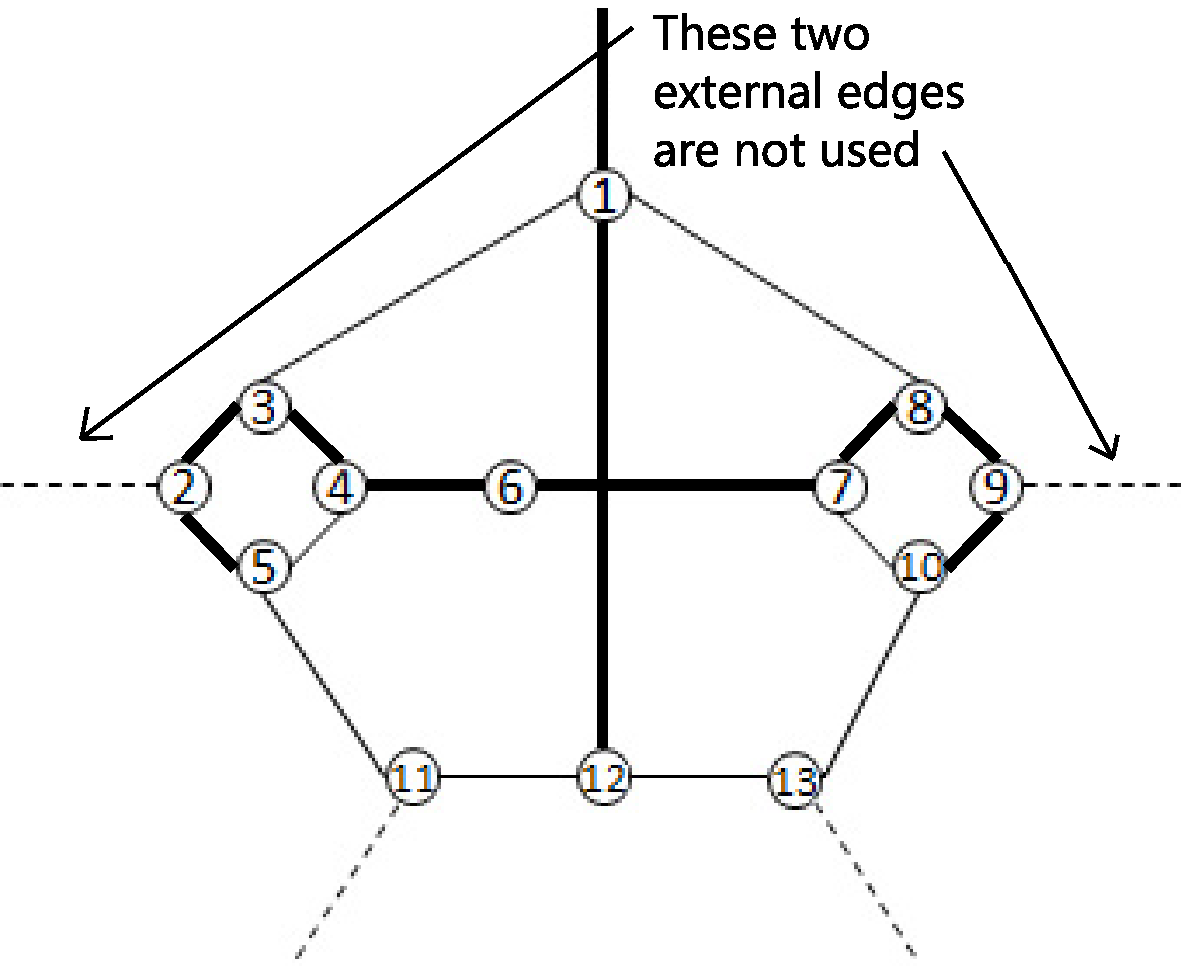} \;\;\;\;\; \includegraphics[scale=0.6]{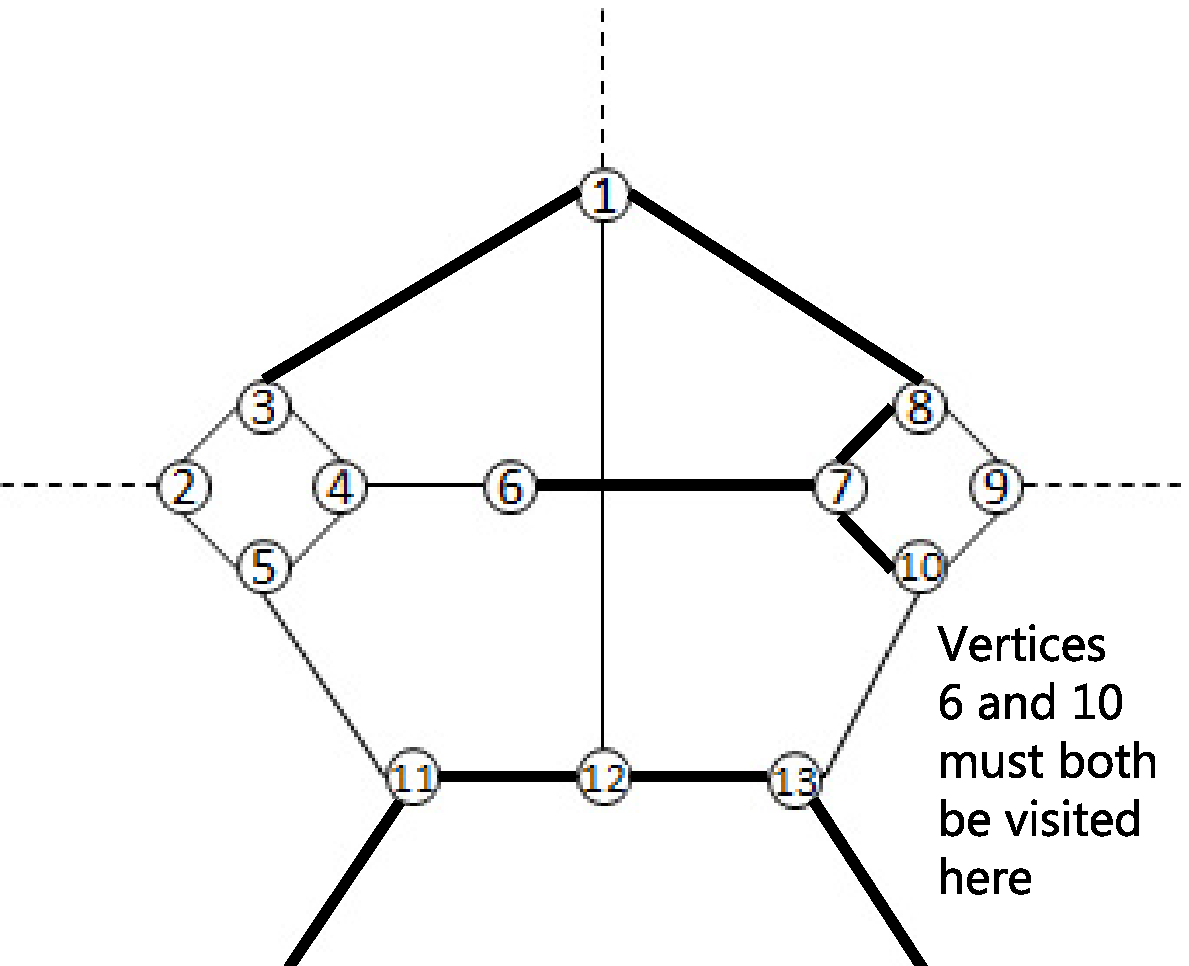}\\
\vspace*{0.67cm}
\hspace*{-0.5cm}\includegraphics[scale=0.6]{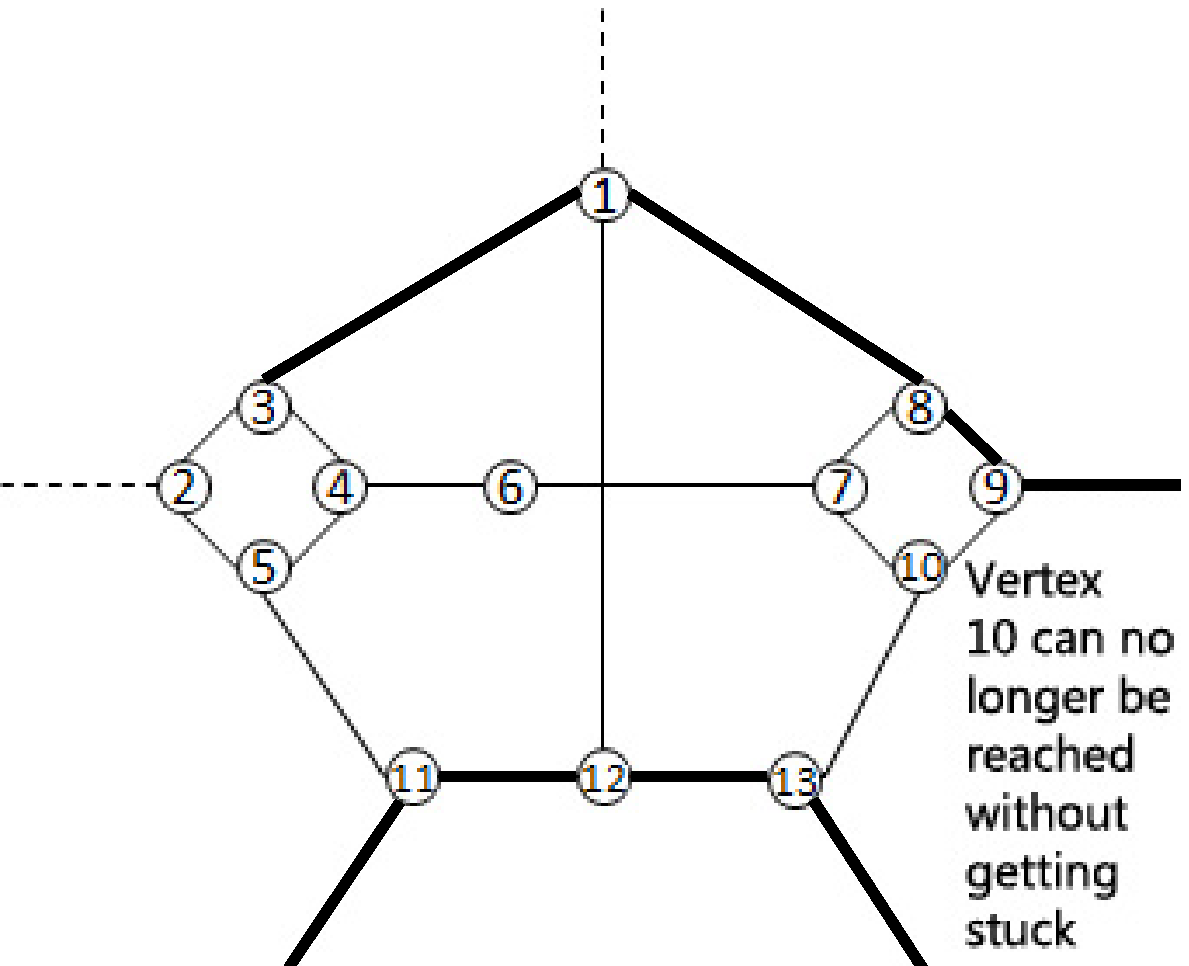} \;\;\;\;\; \includegraphics[scale=0.6]{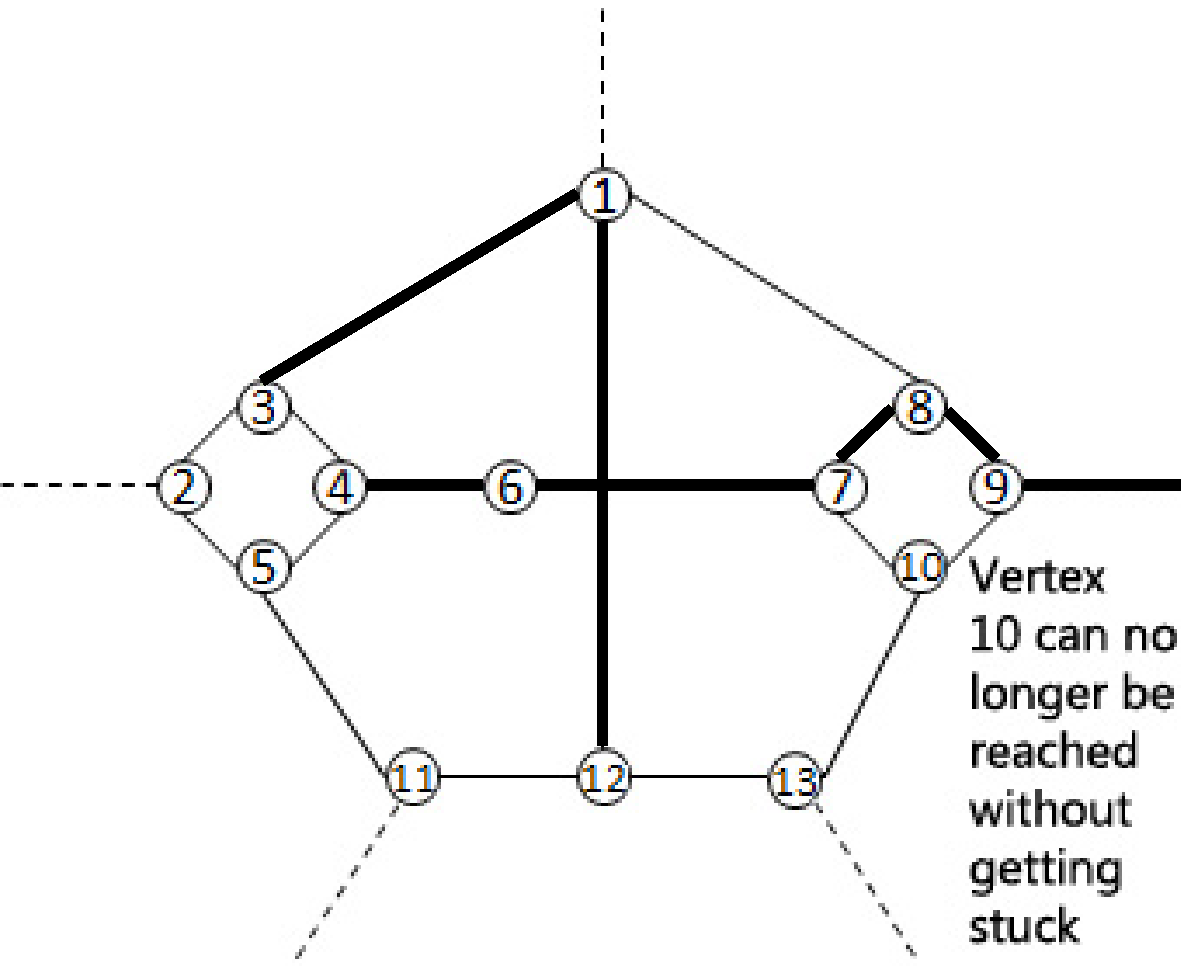}
\caption{The four possible situations that remain if we exclude the situation where the top external edge is entered and edge $(1,3)$ is chosen. The paths taken are indicated by bold edges.\label{fig-5gate_proof2}}
\end{figure}

Since all possibilities have been exhausted, and we must conclude that the initial assumption that the Hamiltonian cycle enters and exits the 5-gate without visiting all vertices, is false. \end{proof}

Note that the 5-gate is a sub-quartic subgraph. Then consider an undirected graph with maximum degree 5. We refer to HCP restricted to such instances as {\em sub-quintic HCP}. Then we can convert from sub-quintic HCP to
sub-quartic HCP simply:

{\underline{Sub-quintic HCP to Sub-quartic HCP Conversion Procedure}}

\begin{itemize}\item Replace any degree 5 vertices with a 5-gate, with the five adjacent edges to the degree 5 vertex forming the five external edges to the 5-gate.\end{itemize}

Since the 5-gate subgraph is sub-quartic, and all remaining vertices in the original instance are degree 4 or less, the resulting instance is now sub-quartic. It is clear that the conversion from sub-quintic HCP to sub-quartic HCP
is a linearly-growing conversion, where in the worst case there are 13 times as many vertices, and 4.4 times as many edges. Then, since there is a linearly-growing conversion from sub-quartic HCP to cubic HCP, we conclude that there
is a linearly-growing conversion from sub-quintic HCP to cubic HCP.

The concept of 4-gates and 5-gates is made general in the following section.

\section{Converting General HCP to Cubic HCP}

Consider the construction displayed in Figure \ref{fig-sgate}, which we call an $s$-gate for all integer $s \geq 4$.

\begin{figure}[h!]
\centering\hspace*{-0.5cm}\includegraphics[scale=0.625]{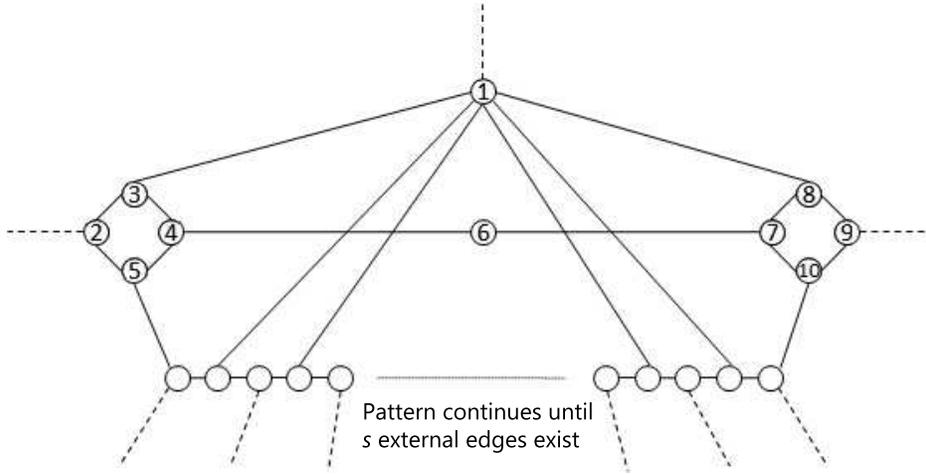}
\caption{An $s$-gate, with the dashed edges representing the $s$ external edges.\label{fig-sgate}}
\end{figure}

The bottom row is made up of $2s-7$ vertices with a path through them. Starting from the first vertex on the bottom row, every second vertex is adjacent to an external edge. The other vertices are adjacent to vertex 1 (the top
vertex).

\begin{lemma}It is possible to enter the $s$-gate via any of the external edges, and exit via any of the remaining external edges.\label{lem-sg}\end{lemma}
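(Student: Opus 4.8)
The plan is to generalise the path-construction arguments used in Lemmas~\ref{lem-4g} and~\ref{lem-5g} to the $s$-gate for arbitrary $s \geq 4$. Since the $s$-gate is undirected, it suffices to exhibit, for every unordered pair of distinct external edges, a single Hamiltonian path of the gate whose endpoints are the two chosen external edges. First I would fix notation by labelling the vertices explicitly: vertex~$1$ is the top vertex, and the bottom row consists of $2s-7$ vertices along a path, say $v_1, v_2, \dots, v_{2s-7}$, where the odd-indexed bottom vertices $v_1, v_3, v_5, \dots$ each carry an external edge and the even-indexed bottom vertices $v_2, v_4, \dots$ are each joined to the top vertex~$1$. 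I would verify by counting that this accounts for exactly $s$ external edges (the bottom row supplying $s-1$ of them, with one further external edge incident to the top vertex, matching the explicit small cases where the top edge attaches at vertex~$1$).

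The core of the argument is to give a uniform traversal recipe. The key observation, already exploited in the $4$- and $5$-gate proofs, is that each even bottom vertex together with the top vertex~$1$ forms a little ``detour'' that can be inserted into the path along the bottom row: travelling $\dots v_{2k-1} - v_{2k} - 1 - \dots$ or routing through~$1$ to pick up the remaining even vertices. I would organise the cases by the positions of the two chosen external edges along the bottom row, distinguishing whether the top vertex's external edge is one of the two. In each case the strategy is to walk along the bottom path from one endpoint to the other, diverting through vertex~$1$ exactly once so that every even bottom vertex and the top vertex get visited. Because the top vertex is adjacent to \emph{every} even bottom vertex, it serves as a flexible hub that can be reached from, and left toward, any required position, which is what makes a single coherent traversal possible regardless of which two external edges were selected.

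The main obstacle I expect is not the existence of any one path but organising the case analysis cleanly enough that it manifestly covers all $\binom{s}{2}$ pairs of external edges for general $s$, rather than for a fixed small value. Concretely, I would reduce the cases using the symmetry of the gate (a left--right reflection of the bottom row), and then I would identify a constant number of qualitatively distinct configurations: both external edges on odd bottom vertices with the top edge unused, the top edge paired with a bottom edge, and the degenerate boundary positions at the two ends of the bottom row. For each configuration I would exhibit the traversal as an explicit generic path written in terms of the indices of the chosen endpoints, and check that it visits vertex~$1$ together with all even bottom vertices exactly once. The delicate point is ensuring the single detour through vertex~$1$ can always be placed so that no even bottom vertex is stranded; I would argue this works by noting that the portion of the bottom path not directly walked can always be ``collected'' via~$1$, since~$1$ is universally adjacent to the even vertices.

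Finally, I would remark that this lemma only establishes \emph{reachability} between external edges (the analogue of Lemmas~\ref{lem-4g} and~\ref{lem-5g}); the complementary statement, that the gate cannot be entered and exited without traversing all its vertices, is the content of the accompanying proposition and would be handled separately by the live-edge forcing argument generalised from Propositions~\ref{prop-4g} and~\ref{prop-5g}.
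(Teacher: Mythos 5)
Your proposal founders on a structural misreading of the $s$-gate, and the gap is fatal to the argument rather than cosmetic. The $s$-gate is not merely a top vertex plus a bottom row: it has $2s+3$ vertices, namely the top vertex 1, the $2s-7$ bottom-row vertices, \emph{and} a nine-vertex core (vertices $2$--$10$ in the paper's labelling, inherited verbatim from the 5-gate) which carries the \emph{left} and \emph{right} external edges and is attached to the two ends of the bottom row (the leftmost bottom vertex to vertex 5, the rightmost to vertex 10). Your edge count gives this away: the odd-indexed bottom vertices number $s-3$, not $s-1$, so your model accounts for only $s-2$ external edges and only $2s-6$ vertices, both inconsistent with the paper's figures and with the stated degree $s-1$ of vertex 1 (which is adjacent to vertices 3, 8, the top external edge, and the $s-4$ even bottom vertices).

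This matters because your ``flexible hub'' recipe cannot work without the core. Vertex 1 is visited exactly once, so it supports exactly one detour; in your stripped-down model, a Hamiltonian path between two \emph{interior} bottom external edges strands at least one end segment of the bottom row, since such a segment is a dead end that cannot be both entered and exited --- so in your model the lemma is actually false. In the real gate it is precisely the core that rescues these cases: the paper's bottom-to-bottom traversal walks right from the entrance to just left of the exit, detours through vertex 1 back down to just left of the entrance, walks left to the bottom-left corner, climbs into the core via vertex 5, traverses $5-2-3-4-6-7-8-9-10$, descends at the bottom-right corner, and walks left to the exit. The paper's proof also disposes of all pairs among the top, left, right, bottom-left and bottom-right edges by reusing the 5-gate paths of Lemma \ref{lem-5g}, leaving only four generic cases involving a bottom edge; your plan to organise cases by position and exploit left--right symmetry is the right instinct, but it must be rebuilt on the correct structure, with the core doing the turn-around work you assigned to vertex 1.
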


\begin{proof}Using equivalent paths to those in the 5-gate case, it is easy to see that it is still possible to find a path between any two of the top, left, right, bottom-left and bottom-right external edges. Then we only need to
consider cases in which one of the bottom external edges is used, and due to symmetry we can ignore the case of using the right or bottom-right external edges. We display the four situations in Figure \ref{fig-sgate_paths}.

Top edge to any bottom edge: First travel from vertex 1 down to whichever vertex is directly to the right of the exit vertex. Then travel right along the bottom row, up to 10, along the path
through $10 - 9 - 8 - 7 - 6 - 4 - 3 - 2 - 5$, down to the bottom left, and right along the bottom row until the exit vertex is reached.\\
Left edge to any bottom edge: First travel along the path through $2 - 3 - 1$, and down to whichever vertex is directly to the right of the exit vertex. Then travel right along the bottom row, up to 10, along the path
through $10 - 9 - 8 - 7 - 6 - 4 - 5$, down to the bottom left, and right along the bottom row until the exit vertex is reached.\\
Bottom-left edge to any bottom edge: First travel right along the bottom row until the vertex directly to the left of the exit vertex is reached. Then travel up to vertex 1, and through
path $1 - 3 - 2 - 5 - 4 - 6 - 7 - 8 - 9 - 10$, down to the bottom right, and left along the bottom row until the exit vertex is reached.\\
Any bottom edge to any other bottom edge: Without loss of generality, assume we enter on the left-most of the two bottom edges. Travel right along the bottom row until the vertex directly to the left of the exit vertex is reached.
Travel up to vertex 1, and then back down to the vertex directly to the left of the entrance vertex. Then travel left along the bottom row, up to 5, and along the path through $5 - 2 - 3 - 4 - 6 - 7 - 8 - 9 - 10$. Then go down to
the bottom right, and travel left along the bottom row until the exit vertex is reached. \end{proof}

\begin{figure}[h!]
\centering\hspace*{-0.5cm}\includegraphics[scale=0.4]{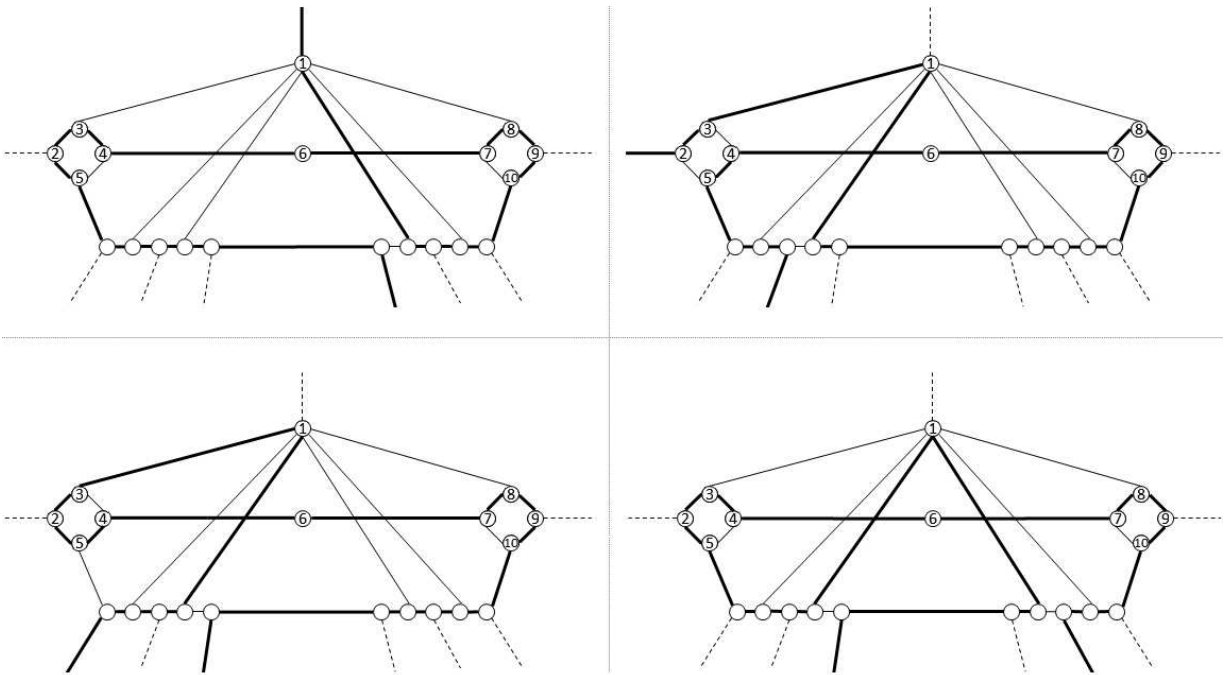}
\caption{The four paths through the $s$-gate involving one of the bottom external edges described in Lemma \ref{lem-sg}, displayed here as bold edges.\label{fig-sgate_paths}}
\end{figure}

We now pose two small results that will be used to shorten the proof of the upcoming Proposition \ref{prop-sg}. These results show that the $s$-gate is equivalent to the $5$-gate under special conditions.

\begin{lemma}If a path enters the $s$-gate, and at some point visits all vertices on the bottom row in succession, then it must visit all other vertices in the $s$-gate before exiting.\label{lem-sit1}\end{lemma}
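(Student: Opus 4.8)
The plan is to analyze the structure of the $s$-gate carefully, identifying which vertices become forced once the entire bottom row has been traversed in succession. The key observation is that the $s$-gate's top portion (vertices $1$ through $13$, inclusive of the characteristic gadget shared with the $5$-gate) is connected to the bottom row only through vertex $1$ and through the external-edge vertices. Once a path has visited every bottom-row vertex consecutively, I would argue that the only way to enter or leave the upper structure is via vertex $1$ and via the paths descending from vertices $5$, $10$, and the other connector vertices. So first I would fix notation: label the bottom row vertices, distinguish the ``external'' vertices (every second one, adjacent to an external edge) from the ``connector'' vertices (adjacent to vertex $1$), and recall that there are $2s-7$ bottom vertices in total.

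The central step is to exploit the hypothesis that all bottom-row vertices are visited in succession. Because they form a single induced path segment, visiting them all consecutively means the path enters the bottom row at one end, traverses straight across, and exits at the other end — it cannot detour up to vertex $1$ in the middle without breaking the ``in succession'' condition. Consequently, I would show that the connector vertices force vertex $1$ to be visited, and that once we are committed to traversing the bottom row as an uninterrupted segment, the descent/ascent edges to vertex $1$ and the edges into the upper gadget leave essentially the same local structure as in the $5$-gate. Here I would invoke Proposition \ref{prop-5g}: the residual obligations on the upper vertices $\{1,\dots,13\}$ mirror exactly the situation analyzed for the $5$-gate, so the same forcing argument (degree-$2$ live-edge propagation starting from vertices $4$, $6$, $8$, $10$) applies verbatim and compels every remaining upper vertex to be visited.

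The main obstacle I anticipate is making precise the claim that ``visiting the bottom row in succession'' genuinely reduces the $s$-gate to the $5$-gate's upper structure, rather than merely resembling it. I would need to verify that the extra bottom-row vertices (those present when $s>5$) are all accounted for by the succession hypothesis and contribute no new live edges into the upper gadget beyond those already present in the $5$-gate — in particular, that the connector vertices, once their bottom-row neighbors are consumed, have their only remaining live edge going to vertex $1$, so they do not create alternative escape routes. Establishing this cleanly, with attention to the two endpoints of the bottom row (which are external-edge vertices and thus behave slightly differently from interior connectors), is the delicate part.

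My plan would therefore proceed as follows. First, I would assume a path enters the $s$-gate and, at some stage, traverses the entire bottom row consecutively; by the succession hypothesis this traversal is a single straight segment across the bottom. Second, I would observe that each connector vertex on the bottom row, having both bottom-row neighbors used by this segment, has its remaining live edge only to vertex $1$; but since the segment already passes through it, these edges become unusable, which forces vertex $1$ to be reached through the upper gadget rather than from below. Third, I would reduce the live-edge configuration on $\{1,\dots,13\}$ to precisely that of the $5$-gate and apply Proposition \ref{prop-5g} to conclude that all upper vertices, and hence all vertices of the $s$-gate, must be visited before the path exits. I expect the write-up to lean heavily on the figure and on a direct appeal to the already-established $5$-gate proposition, keeping the new content confined to the bottom-row bookkeeping.
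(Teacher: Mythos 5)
Your proposal is correct and follows essentially the same route as the paper: the paper's proof is a one-liner observing that the situation is identical to the 5-gate in which the path $11 - 12 - 13$ is chosen, and then invoking Proposition \ref{prop-5g}, which is exactly your reduction (your bottom-row bookkeeping — succession forces a straight traversal, killing the connector-to-vertex-1 edges — is the detail the paper leaves implicit as ``easy to see''). One small slip: the structure shared with the 5-gate is vertices $1$ through $10$, not $1$ through $13$ (vertices $11$--$13$ are the 5-gate's bottom row, to which the $s$-gate's bottom row contracts), but this does not affect the argument.
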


\begin{proof}It is easy to see that this situation is identical to that of the 5-gate in which the path $11 - 12 - 13$ is chosen. Then the result follows from Proposition \ref{prop-5g}. \end{proof}

\begin{lemma}If a path enters the $s$-gate, and at some point visits one group of vertices on the bottom row in succession, and later the remaining vertices on the bottom row are visited in succession, then the path must
visit all other vertices in the $s$-gate before exiting.\label{lem-sit2}\end{lemma}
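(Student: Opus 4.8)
The plan is to reduce the two‑group situation to the $5$‑gate, exactly as Lemma~\ref{lem-sit1} reduced the one‑group situation to the path $11-12-13$. First I would make the hypothesis precise: saying that the bottom row is covered by two disjoint successions means that every edge of the bottom path $b_1 - b_2 - \cdots - b_{2s-7}$ is used by the cycle except a single edge $b_k b_{k+1}$, so the two groups are the prefix $\{b_1,\ldots,b_k\}$ and the suffix $\{b_{k+1},\ldots,b_{2s-7}\}$. I would then read off the four edges by which these subpaths leave the bottom row. Since $b_k$ and $b_{k+1}$ have opposite parities, exactly one of them is even; that even vertex has its two bottom‑neighbours split between a used edge and the cut edge $b_k b_{k+1}$, so it is forced to use its edge to vertex $1$. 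Conversely, any even bottom vertex lying in the \emph{interior} of a group is traversed along the bottom path and so cannot also use its edge to vertex $1$; since vertex $1$ has degree two in the cycle, the boundary even vertex is therefore the unique bottom vertex joined to vertex $1$, and vertex $1$'s second cycle‑edge goes to $3$, to $8$, or to the top external edge.

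With this in hand I would carry out the contraction. The three distinguished bottom vertices --- the endpoint $b_1$ (which, like $11$, carries the anchor to vertex $5$ together with an external edge), the boundary even vertex (which, like $12$, is the unique bottom vertex joined to vertex $1$), and the endpoint $b_{2s-7}$ (which, like $13$, carries the anchor to vertex $10$ together with an external edge) --- are identified with $11$, $12$, $13$, and the two bottom subpaths joining them are contracted to the edges $11-12$ and $12-13$. All interior even vertices' edges to vertex $1$ and all interior odd vertices' external edges are unused, so deleting them leaves the cycle undisturbed, and the cut edge $b_k b_{k+1}$ becomes exactly one of $11-12$ or $12-13$, according to whether the boundary even vertex lies in the suffix or the prefix. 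The contracted object is then precisely a $5$‑gate whose bottom row is traversed in two successions, i.e.\ one of the scenarios ``left to bottom‑left'', ``left to bottom‑right'' or ``bottom‑left to bottom‑right'' of Lemma~\ref{lem-5g}, or a left--right mirror image thereof. Proposition~\ref{prop-5g} guarantees that this $5$‑gate is fully traversed, and pulling the contraction back shows that every core vertex $1,\ldots,10$ of the $s$‑gate is visited before the path exits.

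The step I expect to be the real obstacle is verifying that the contraction target is always a genuine $5$‑gate traversal, and in particular excluding the one configuration that has no $5$‑gate analogue: a \emph{pass‑through} group, in which an entire group of bottom vertices is entered and left through two external edges without meeting the core. Such a group would contract to a single bottom vertex carrying two external edges, which the $5$‑gate does not contain, so it must be ruled out separately. Here I would invoke a forcing chain: if (say) $b_1$ uses its external edge rather than its edge to vertex $5$, then vertex $5$ (neighbours $2,4,b_1$) must use $2$ and $4$, and the degree‑two vertex $6$ must use $4$ and $7$; hence vertex $4$ uses $5$ and $6$ (so $3-4$ is unused), vertex $3$ uses $1$ and $2$, vertex $2$ uses $3$ and $5$, and, together with the forced edge from vertex $1$ to the boundary even vertex, this leaves $1-8$ unused. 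Tracing the consequences then pins down the remaining core edges and produces a contradiction in every case --- either the forced edges close into a proper sub‑cycle $b_{k+1}-1-3-2-5-4-6-7-8-9-10-b_{2s-7}-\cdots-b_{k+1}$ that omits the non‑empty pass‑through group, or, when the far anchor is likewise bypassed, vertex $8$ is left with only one usable edge. By the symmetry of the gate the mirror situation is excluded identically, so no pass‑through group can occur and only the three genuine $5$‑gate scenarios survive.
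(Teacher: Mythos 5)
Your proposal is correct, and its core is the same as the paper's: contract the two bottom-row successions so that the configuration lives inside a $5$-gate, then invoke Proposition \ref{prop-5g}. The difference is one of completeness. The paper's entire proof is the assertion that the situation ``is identical'' to the $5$-gate in which the path $11 - 12 - 1$ is chosen and $13$ is visited later (or its mirror image) --- exactly your contraction sending $b_1 \mapsto 11$, the boundary even vertex $\mapsto 12$, and $b_{2s-7} \mapsto 13$. What the paper never addresses is precisely the obstruction you isolate: when the group \emph{not} containing the boundary even vertex is entered and exited through two of its own external edges (your pass-through case), the contracted bottom vertex carries two external edges, the contracted object is not a $5$-gate, and Proposition \ref{prop-5g} cannot be cited for it. This configuration genuinely satisfies the lemma's hypothesis --- indeed it can occur in the very way Lemma \ref{lem-sit2} is invoked in Case 2 of Proposition \ref{prop-sg}, where the group on the far side of $a$ has an external edge at each of its two ends --- so it does require separate treatment. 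Your forcing chain disposing of it checks out: bypassing the near anchor forces $5$ and $6$, hence $4$, $3$, $2$ and $1$, killing edge $(1,8)$; then either the far anchor $10$ is used and the forced edges close a proper subcycle through $1 - 3 - 2 - 5 - 4 - 6 - 7 - 8 - 9 - 10$ and the non-pass-through group, omitting the nonempty pass-through group, or $10$ is also bypassed and vertex $8$ is left with the single live edge $(8,9)$. In short, your write-up is not just a verification but a repair: the paper's two-sentence identification is, strictly read, incomplete, and your pass-through analysis supplies the missing piece that makes the reduction to the $5$-gate airtight.
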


\begin{proof}Similarly to Lemma \ref{lem-sit1}, it is easy to see that this situation is identical to that of the 5-gate in which either the path $11 - 12 - 1$ is chosen and 13 is visited later, or the path $13 - 12 - 1$ is chosen
and 11 is visited later. Then the result follows from Proposition \ref{prop-5g}. \end{proof}

We now present the main result of this section. During the proof, we will consider multiple paths traveling through the $s$-gate, which each make up a small part of a larger Hamiltonian cycle, and show that in all cases, this is
impossible.

\begin{proposition}A Hamiltonian cycle, upon entering an $s$-gate, must traverse every vertex before exiting.\label{prop-sg}\end{proposition}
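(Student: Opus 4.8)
The plan is to argue by contradiction and to reduce every case to the already-settled $5$-gate (Proposition \ref{prop-5g}), using Lemmas \ref{lem-sit1} and \ref{lem-sit2} to collapse the long bottom row onto the three bottom vertices of the $5$-gate. Suppose a Hamiltonian cycle enters the $s$-gate and exits before visiting all of its vertices; then the trace of the cycle on the $s$-gate is a union of at least two vertex-disjoint paths, each beginning and ending on an external edge, which together cover every vertex of the gate. Label the bottom-row vertices $b_1,\dots,b_{2s-7}$ from left to right, so that the odd-indexed $b_i$ carry the external edges while the even-indexed $b_i$ are joined to the top vertex $1$.

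First I would note that the subgraph induced by vertices $2$ through $10$ (together with the external edges at $2$ and $9$ and the connecting edges $5$--$b_1$ and $10$--$b_{2s-7}$) is structurally identical to the corresponding part of the $5$-gate, and that the only edges of vertex $1$ absent from the $5$-gate are its extra edges to the even-indexed bottom vertices. Hence every live-edge forcing in the proof of Proposition \ref{prop-5g} that refers only to the core vertices $4,6,7,8,10$ (for instance, that entering vertex $4$ forces the segment $4-6-7-8-9$) transfers verbatim. All of the new content is therefore concentrated in the behaviour of the bottom row.

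The decisive observation is that vertex $1$ lies on a single path and uses exactly two of its incident edges, so at most two even-indexed bottom vertices can route up to vertex $1$. Any even-indexed bottom vertex that does not do so must use both of its horizontal edges, and this forces each interior odd-indexed bottom vertex lying between two such passed-through vertices to use both of its horizontal edges as well, rather than its external edge. Thus the bottom row can only be interrupted immediately beside a vertex that routes to vertex $1$. If there are no such diversions the whole bottom row is one run and Lemma \ref{lem-sit1} applies; if there is exactly one diversion the bottom row splits into two runs, one leaving through vertex $1$, and Lemma \ref{lem-sit2} applies. In either case the configuration coincides with one already ruled out for the $5$-gate, giving the contradiction.

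The step I expect to be the main obstacle is showing that this dichotomy is exhaustive, and in particular disposing of the configuration --- impossible in the $5$-gate for want of vertices but available once $s\ge 6$ --- in which both of vertex $1$'s used edges descend to the bottom row. Here vertex $1$ is detached from the core, the top external edge is unused, and vertices $3$ and $8$ each drop to only two live edges; I would trace the resulting forcings ($2-3-4$, $4-6-7$, $7-8-9$, and then $5$ and $10$ each forced to use both of their remaining edges) to show that the core collapses to the single path $b_1-5-2-3-4-6-7-8-9-10-b_{2s-7}$ with the left, right and top external edges all unused. The task is then to verify that this rigid structure, together with the $b_{2i}-1-b_{2j}$ detour and the middle bottom-row run it leaves behind, cannot close up into a Hamiltonian cycle without again reducing to a $5$-gate situation forbidden by Proposition \ref{prop-5g}; combining the ``at most two diversions'' count with these core forcings is the crux that makes the case analysis finite and complete.
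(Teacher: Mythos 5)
Your overall architecture is the same as the paper's: assume the cycle crosses the gate in at least two disjoint paths, classify by how many of the two edges used at vertex $1$ descend to the bottom row (zero, one, or two), and dispatch the first two cases via Lemmata \ref{lem-sit1} and \ref{lem-sit2}; that part of your argument is correct. The genuine gap is the two-diversion case, which you explicitly leave as ``the task'': you trace the forced core fragment $b_1 - 5 - 2 - 3 - 4 - 6 - 7 - 8 - 9 - 10 - b_{2s-7}$ correctly (the forcing in fact extends along the bottom row out to the vertices immediately left of $a$ and right of $b$), but you never show that this structure, together with the path $a - 1 - b$, cannot be completed to a Hamiltonian cycle. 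Moreover, the strategy you sketch for doing so --- reducing ``again to a $5$-gate situation forbidden by Proposition \ref{prop-5g}'' --- cannot work, for the reason you yourself note: when both used edges at vertex $1$ go to the bottom row, the configuration has no $5$-gate analogue (vertex $1$ of the $5$-gate has only one bottom-row neighbour), so Proposition \ref{prop-5g} is simply unavailable here. This case is exactly where the $s$-gate genuinely differs from the $5$-gate, and it needs a new argument, not a reduction.

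The argument the paper uses (and that your write-up is missing) is direct. The path through vertex $1$ must continue beyond $a$ and beyond $b$, and one examines the two ways it can leave $b$. If it turns left into the strip between $a$ and $b$, live-edge forcing drives it through the entire strip to the vertex just right of $a$, where it must exit via the external edge; the end at $a$ is then forced onto the forced fragment, splicing everything into one path. If it turns right, it splices onto the forced fragment immediately; the edge from $a$ to the vertex on its left would then close a short cycle, so the end at $a$ is instead driven through the whole strip and exits beside $b$, again producing one path. In both cases a single path contains every vertex of the gate, contradicting the assumption that the Hamiltonian cycle enters and exits the gate at least twice. Until you supply this (or an equivalent) analysis, your proof is incomplete precisely at its crux.
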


\begin{proof}Suppose that during the course of a Hamiltonian cycle, the $s$-gate is entered, and then exited before all vertices are visited. Then the $s$-gate must be re-entered later. So there are at least two disjoint paths
travelling through the $s$-gate. Since we have a Hamiltonian cycle, we know that any given vertex in the $s$-gate must lie on one of the paths. Consider the path containing vertex 1. We will now consider three possible cases,
relating to which edges adjacent to vertex 1 are used in this path.

Case 1: Edges $(1,3)$ and $(1,8)$ are used. Then all other bottom-row vertices adjacent to vertex 1 have only two live edges. Therefore once either the bottom-left or bottom-right vertex is visited (either on this path, or a
different path), the bottom vertices must all be visited in succession. However, by Lemma \ref{lem-sit1}, we know then that the path containing the bottom vertices must contain all vertices on the $s$-gate, which violates the initial
assumption. So this case cannot occur. This situation is displayed in the first part of Figure \ref{fig-sgate_proof1}.

Case 2: Exactly one edge going from vertex 1 to a bottom-row vertex is used. Call this edge $(1,a)$. Then all vertices adjacent to 1 in the bottom row, except for vertex $a$, have two live edges remaining. Clearly then the vertices
on one side of $a$ must be visited in succession. Without loss of generality, we assume the vertices to the right of $a$ are visited in succession. Then, at some point later (either on this path, or a different path), the
bottom-left vertex is visited. Then either directly before, or directly after this point, all of the remaining bottom vertices must be visited in succession. However, by Lemma \ref{lem-sit2}, we know then that this path contains all
vertices on the $s$-gate, which violates the initial assumption. Therefore this case cannot occur either. This situation is displayed in the second part of Figure \ref{fig-sgate_proof1}.

\begin{figure}[h!]
\centering\hspace*{-0.5cm}\includegraphics[scale=0.4]{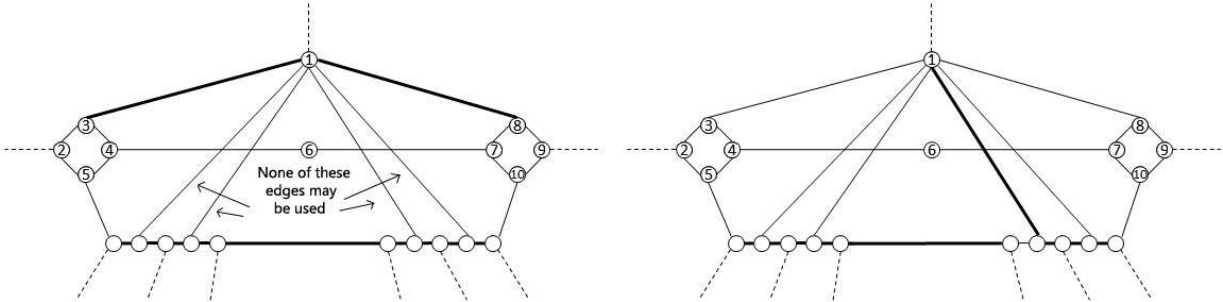}
\caption{The first two cases in Proposition \ref{prop-sg}, where the paths are indicated by bold edges. These two situations are both equivalent to the 5-gate by Lemmata \ref{lem-sit1} -- \ref{lem-sit2}.\label{fig-sgate_proof1}}
\end{figure}

Case 3: Exactly two edges going from vertex 1 to a bottom-row vertex are used. Call these edges $(1,a)$ and $(1,b)$, and without loss of generality suppose that vertex $b$ is to the right of $a$. Then vertices 3, 6 and 8 all have
only two live edges remaining. It is clear then that one of the paths through the $s$-gate contains the fragment $2 - 3 - 4 - 6 - 7 - 8 - 9$. Also, if this path enters or exits via the left external edge, then it will be impossible
to visit vertex 5. Likewise, if the path enters or exits via the right external edge, it will be impossible to visit vertex 10. Since neither can occur, we see that this path contains a fragment originating at the bottom left vertex,
travelling through $5 - 2 - 3 - 4 - 6 - 7 - 8 - 9 - 10$ and down to the bottom right vertex. Also, due to vertices with only two live edges remaining, from the bottom-right vertex the path continues left along the bottom row until
it reaches the vertex to the right of $b$. Similarly, before reaching the bottom-left vertex, the path must have travelled from left along the bottom row, originating at the vertex to the left of $a$. This situation is shown in
Figure \ref{fig-sgate_proof2}, with the edges forced to be on the path displayed in bold.

\begin{figure}[h!]
\centering\hspace*{-0.5cm}\includegraphics[scale=0.62]{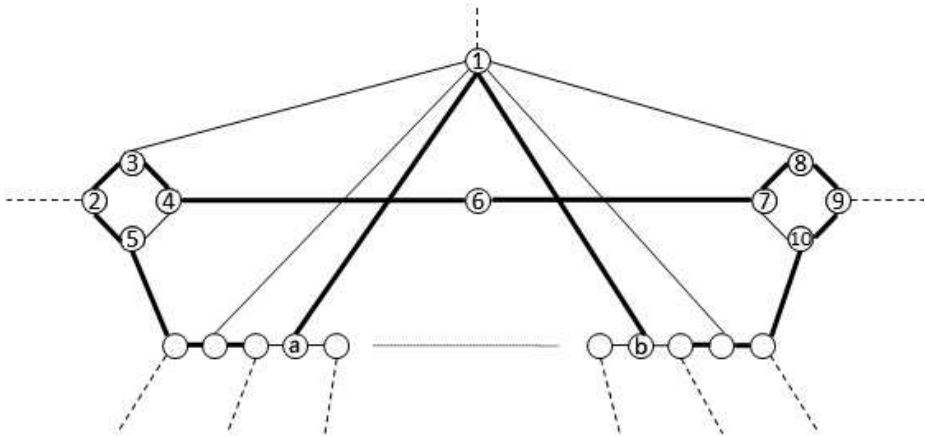}
\caption{Case 3 in Proposition \ref{prop-sg}, where the paths are indicated by bold edges.\label{fig-sgate_proof2}}
\end{figure}

Consider the path containing vertices $a - 1 - b$. After leaving vertex $b$, the path must continue either left or right. We will consider these two final cases.

Case 3.1: Suppose the path goes to the left after vertex $b$. Then due to the vertices with only two live edges remaining, the path continues until the vertex to the right of $a$, at which point the path must exit the $s$-gate.
However, then the path $a - 1 - b$ must be joined to the other, forced, path. This combined path visits every vertex in the $s$-gate, violating the initial assumption, and
therefore this case cannot occur. This situation is displayed in the first part of Figure \ref{fig-sgate_proof3}.

Case 3.2: Suppose the path goes to the right after vertex $b$. Then it joins the other, forced, path.. Now, consider the vertex to the left of $a$. It cannot be joined to $a$ on the path, or else a cycle is  closed off, which violates the assumption that a Hamiltonian cycle is formed. So therefore, before visiting vertex $a$, the path must have visited the vertex to the right of vertex $a$, but due to vertices with only two live remaining edges, this means all vertices between $a$ and $b$ are on this path as well. Then this path visits every vertex in the $s$-gate, violating the initial assumption, and therefore this case cannot occur. This situation is displayed in the second part of Figure \ref{fig-sgate_proof3}.

\begin{figure}[h!]
\centering\hspace*{-0.5cm}\includegraphics[scale=0.4]{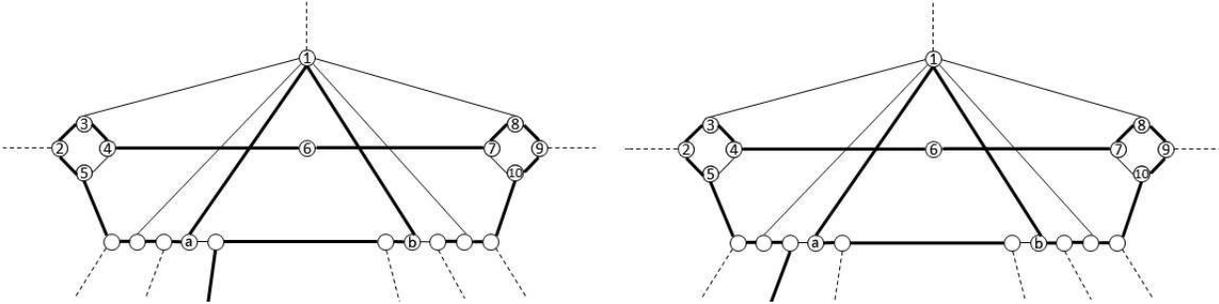}
\caption{Case 3.1 and 3.2 in Proposition \ref{prop-sg}, where the paths are indicated by bold edges. In each case, a single path contains every vertex in the $s$-gate, violating the initial assumption.\label{fig-sgate_proof3}}
\end{figure}

Since no potential cases may occur, the initial assumption that a Hamiltonian cycle may leave the $s$-gate without first visiting all vertices is flawed, and the proof is complete. \end{proof}

Note that the $s$-gate is sub-cubic except for vertex 1, which has degree $s-1$. Then, inductively, we may substitute in smaller and smaller gates until a sub-cubic subgraph is reached. Since an $s$-gate has $2s+3$ vertices, after
substituting in the $i$-gate we obtain $2i+2$ additional vertices of maximum degree 3, plus one more vertex of degree $i-1$. Continuing until $i = 4$, we see that after the induction is complete, there are
\begin{eqnarray*}1 + \sum_{i=4}^s \left(2 + 2i\right) & = & s^2 + 3s - 17 \mbox{ vertices.}\end{eqnarray*}
Similarly, in an $s$-gate there are $3s+2$ edges (not including the $s$ external edges). Each time we substitute in a smaller gate, no edges are destroyed, and so at stage $i$, there are $3i+2$ new edges. Continuing
until $i = 4$, we see that after the induction is complete, there are
\begin{eqnarray*}s + \sum_{i=4}^s 3i + 2 & = & \frac{3s^2 + 9s - 48}{2} \mbox{ edges, including external edges.}\end{eqnarray*}

We may now propose a very simple conversion algorithm from undirected HCP to sub-cubic HCP.

{\underline{Undirected HCP to Sub-cubic HCP Conversion Procedure}}

\begin{itemize}\item If any vertices with degree $s \geq 4$ exist, replace them with an $s$-gate.
\item Repeat the above process until all vertices have maximum degree 3.\end{itemize}

Note that if we then proceed to convert the graph from sub-cubic to cubic, $s-3$ degree 2 vertices are replaced by diamond subgraphs, and so the resultant instance will have $s^2 + 6s - 26$ vertices and $\frac{3s^2 + 19s - 78}{2}$ edges, including external edges. It should also be noted that there are multiple ways to traverse the $s$-gate, so the Hamiltonian cycles in the original graph have a 1-many relationship with the Hamiltonian cycles in the graph produced by the conversion algorithm.

\begin{lemma}The conversion process described above is a quadratically-growing conversion.\label{lem-quad}\end{lemma}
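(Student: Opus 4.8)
The plan is to show that the new input size is bounded above by a degree-two polynomial in the old input size, and that this bound is actually attained by a suitable family of instances, so that the growth is genuinely quadratic rather than linear. Throughout, let the original undirected instance $\Gamma$ have $N$ vertices with degree sequence $\{d_v\}$ and $e = \frac12\sum_v d_v$ edges; since any meaningful instance satisfies $e \geq N$, the old input size $n = N + e$ satisfies $e \leq n \leq 2e$, so it suffices to bound the new size in terms of $e$.

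First I would assemble the exact counts. The procedure leaves every vertex of degree at most $3$ untouched and replaces each vertex of degree $s \geq 4$ by the full cascade of gates down to the sub-cubic level. By the vertex- and edge-count formulas derived immediately above, such a replacement introduces $s^2 + 3s - 17$ vertices and $\frac{3s^2+9s-48}{2}$ edges counting the $s$ external edges, i.e. $\frac{3s^2+7s-48}{2}$ new internal edges. Since the external edges are exactly the preserved original edges, and distinct vertices' cascades share only these original edges, summing over vertices gives
\[
V' = \sum_{d_v\geq 4}\left(d_v^2 + 3d_v - 17\right) + |\{v : d_v \leq 3\}|, \qquad E' = e + \sum_{d_v \geq 4}\frac{3d_v^2 + 7d_v - 48}{2}.
\]

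For the upper bound, the key observation is that although each summand is quadratic in the corresponding degree, the aggregate remains quadratic in $e$, because $\sum_v d_v^2 \leq \left(\sum_v d_v\right)^2 = (2e)^2 = 4e^2$. Dropping the negative constant terms and extending the sums to all vertices, this yields $V' \leq 4e^2 + 6e + N$ and $E' \leq 6e^2 + 8e$, so the new input size $V' + E'$ is $O(e^2) = O(n^2)$; hence the conversion is at most quadratically-growing. The one point requiring care here is precisely this aggregation step: a priori one might fear that $\sum_v d_v^2$ could exceed any quadratic in $e$, but it cannot, and this inequality is the crux of the argument.

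Finally, to confirm that the quadratic bound is tight, and so that the conversion is not merely linearly-growing, I would exhibit a family attaining it. Take the wheel $W_m$, consisting of a hub $u$ adjacent to $v_1,\dots,v_m$ together with the rim cycle $v_1 v_2 \cdots v_m v_1$; this is a meaningful instance with $N = m+1$, $e = 2m$, and hence $n = \Theta(m)$, in which $u$ is the only vertex of degree $\geq 4$. Its replacement alone contributes $m^2 + 3m - 17$ vertices and $\frac{3m^2+7m-48}{2}$ internal edges, so the new input size is $\Theta(m^2) = \Theta(n^2)$. Together with the upper bound, this shows that the worst-case new input size is described by a polynomial of degree exactly two, which is the definition of a quadratically-growing conversion.
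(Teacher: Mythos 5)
Your proof is correct and takes essentially the same route as the paper: per-vertex cascade counts of order $d_v^2$, aggregated via $\sum_v d_v^2 \le \bigl(\sum_v d_v\bigr)^2 = 4e^2$, giving an $O(e^2)$ bound on the new input size. Two remarks: you state the key aggregation inequality in the correct direction, whereas the paper's proof writes the needed relation backwards as $k^2 = O\bigl(\sum_i d_i^2\bigr)$ (what is actually needed, and what you prove, is $\sum_i d_i^2 = O(k^2)$); and your wheel-graph tightness construction is a correct addition that the paper does not attempt, since the paper interprets \lq\lq quadratically-growing" as requiring only the quadratic upper bound.
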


\begin{proof}Consider an undirected HCP instance containing $N$ vertices. Denote by $d_i$ the degree of vertex $i$. Then, if we define $k := \sum\limits_{i=1}^N d_i$, it is clear that $k$ is equal to twice the number of edges in the
undirected HCP instance. Since the number of edges will be greater than the number of vertices (or else the instance will be trivially non-Hamiltonian), it is clear that $k$ is of the same order as the input size of
the undirected HCP instance. Then, if we perform the above conversion on all vertices in the undirected HCP instance, it is clear then that in the worst case, there will be $O\left(\sum\limits_{i=1}^N d_i^2\right)$ vertices and edges
in the sub-cubic HCP instance. It can also be easily seen that $k^2 = O\left(\sum\limits_{i=1}^N d_i^2\right)$. Therefore, in the worst case, this conversion results in no more than $O(k^2)$ vertices and edges in the sub-cubic HCP
instance, and is therefore a quadratically-growing conversion. Finally, after the linearly-growing conversion from sub-cubic HCP to cubic HCP, we obtain a quadratically-growing conversion from undirected HCP to cubic HCP. \end{proof}

We now consider general HCP. The only distinction between general HCP and undirected HCP is that directed edges are permitted in instances of the former problem. In such a case, it makes sense to think not of the
degree for each vertex, but rather the in-degree $s$ and out-degree $r$ for each vertex. Of course, when considering such a problem we are permitted to simply convert it first to an undirected instance (a linearly-growing conversion), and then proceed using the above conversion. However, in doing so, we raise the maximum degree by 1, which may be inefficient, and triple the number of vertices before beginning the quadratically-growing conversion. An alternative approach is to simply use the above conversion, but to orient the external edges as necessary. So if a vertex has $s$ incoming edges and $r$ outgoing edges, an gate of size $s + r$ may be constructed whereby $s$ of the external edges are incoming edges and $r$ of the external edges are outgoing edges. We refer to such a gate, with directed external edges, as an $(s,r)$-gate.

To see that simply orienting the appropriate external edges will work, it is sufficient to recognise that directed edges behave exactly like undirected edges in the context of HCP, except that they are \lq\lq blocked" from one end. By orienting the external edges, a cycle will be similarly blocked from entering or exiting via an inappropriate external edge.

Of course, in practice, there might be cases of an incoming edge coming from, and an outgoing edges going to, the same neighbouring vertex, in which case the two edges may be treated as a single undirected edge. In general, if a
vertex $v$ has $u$ undirected edges, $s$ other incoming edges and $r$ other outgoing edges, then a gate of size $u + s + r$, with $u$ undirected external edges and $s+r$ directed external edges, may be used. However, for the remainder of this manuscript, we will use $(s,r)$-gates for the sake of simplicity of arguments.

Following from the preceding discussion and Lemma \ref{lem-quad}, we conclude this section with an important corollary.

\begin{corollary}If we restrict general HCP to instances in which the maximum in-degree and maximum out-degree are both bounded above by a fixed constant, then the conversion to cubic HCP described above is
linearly-growing.\label{col-linear_conversion}\end{corollary}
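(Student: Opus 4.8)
The plan is to revisit the counting argument used in the proof of Lemma~\ref{lem-quad} and to observe that its only source of quadratic growth was the crude estimate $\sum_i d_i^2 \le \left(\sum_i d_i\right)^2$, which becomes wasteful precisely when the degrees are uniformly bounded. First I would fix the notation exactly as before: for a general HCP instance on $N$ vertices, let vertex $i$ have in-degree $s_i$ and out-degree $r_i$, and set $d_i := s_i + r_i$. By the preceding discussion, such a vertex is replaced by an $(s_i,r_i)$-gate, whose underlying graph is the gate of size $d_i$ with its external edges merely oriented. The explicit vertex and edge counts established just before Lemma~\ref{lem-quad} show that substituting and fully reducing a gate of size $d_i$ contributes $d_i^2 + 3d_i - 17$ vertices and a comparable number of edges; summing over all vertices, the converted instance has $O\!\left(\sum_i d_i^2\right)$ vertices and edges, exactly as in that proof.

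Next I would introduce the hypothesis of the corollary. Let $C$ be the fixed constant bounding both the maximum in-degree and the maximum out-degree, so that $d_i = s_i + r_i \le 2C$ for every vertex $i$. Writing $k := \sum_i d_i$, which as in Lemma~\ref{lem-quad} is of the same order as the input size since $k = 2e$ and $e \ge N$, the key estimate is simply
$$\sum_{i=1}^N d_i^2 \;\le\; \Big(\max_i d_i\Big)\sum_{i=1}^N d_i \;\le\; 2C\,k \;=\; O(k).$$
Thus the quantity that was quadratic in $k$ in the general case collapses to a linear one once the degrees are bounded by a constant. I would then conclude that the general HCP to sub-cubic HCP conversion produces $O(k)$ vertices and edges, and that composing with the linearly-growing sub-cubic to cubic conversion preserves linear growth, so the overall conversion to cubic HCP is linearly-growing.

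The argument is short and I do not anticipate a genuine obstacle; the only point requiring care is the bookkeeping for the directed case. I would make explicit that the $(s_i,r_i)$-gate replaces a vertex of total degree $d_i = s_i + r_i$ by the same gate used in the undirected case, merely with oriented external edges, so that the vertex and edge counts are unaffected by the orientation and the estimate above applies verbatim. This also confirms that bounding the in-degree and out-degree \emph{separately} is harmless, since their sum remains bounded by the constant $2C$, which is all the argument requires.
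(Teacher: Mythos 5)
Your proposal is correct and follows essentially the same route as the paper: both invoke the counting from Lemma~\ref{lem-quad} and observe that the bounded-degree hypothesis collapses the quadratic quantity $\sum_i d_i^2$ to a linear one, then compose with the linearly-growing sub-cubic to cubic conversion. The only cosmetic difference is that the paper bounds each summand by the constant $d^2$ to get $O(N)$, while you factor out $\max_i d_i \le 2C$ to get $O(k)$; these are equivalent since $N$ and $k$ are of the same order under the hypothesis.
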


\begin{proof}Suppose the general HCP instance contains $N$ vertices, and that the maximum in-degree and maximum out-degree of the graph are both bounded above by a fixed constant $d$. Then, from Lemma \ref{lem-quad}, we see that
in the sub-cubic HCP instance there will be $O\left(\sum\limits_{i=1}^N d^2\right)$ vertices and edges, and since $d = O(1)$, this simplies to $O(N)$. \end{proof}

We use Corollary \ref{col-linear_conversion} in the following section, in which we describe a linearly-growing conversion from general HCP to general HCP of bounded in-degree and out-degree.

\section{A Linearly-growing Conversion from General HCP to Cubic HCP}

The final approach to the conversion of general HCP to cubic HCP is quite simple to implement, but proving that it is linearly-growing requires all of the above results. During the conversion we will make use of three special
subgraphs, which we call a split, an in-split and an out-split.

A split contains two vertices, and the directed edge $(1,2)$. There are external edges adjacent to vertices 1 and 2, such that the external edges adjacent to vertex 1 are all incoming edges, and the external edges adjacent to vertex 2 are all outgoing edges. An in-split contains three vertices, and the directed edges $(1,2)$, $(1,3)$, $(2,1)$ and $(2,3)$, and has external edges adjacent to each vertex. The external edges adjacent to vertices 1 and 2 are all incoming edges, and the external edges adjacent to vertex 3 are all outgoing edges. An out-split contains three vertices, and the directed edges $(1,2)$, $(1,3)$, $(2,3)$ and $(3,2)$, and has external edges adjacent to each vertex. The external edges adjacent to vertex 1 are all incoming edges, while the external edges adjacent to vertices 2 and 3 are all outgoing edges. These three subgraphs are displayed in Figure \ref{fig-split_graphs}.

\begin{figure}[h!]
\centering\hspace*{-0.5cm}\includegraphics[scale=0.67]{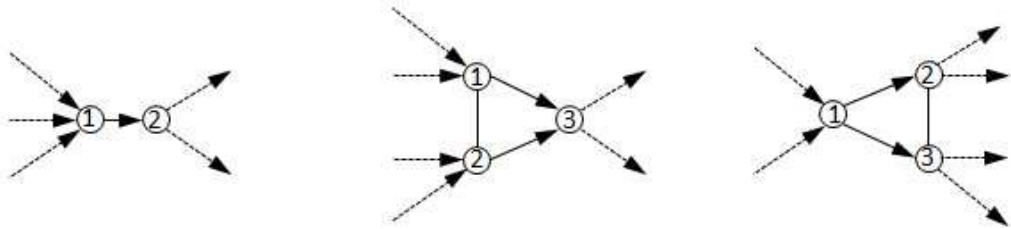}
\caption{A split, an in-split, and an out-split respectively. The arrow-less edge in each of the in-split and the out-split represents two directed edges between the same two vertices, which functions as an undirected edge. The dashed edges represent directed external edges.\label{fig-split_graphs}}
\end{figure}

Consider a graph containing any of the above three subgraphs. It is trivial to check that any Hamiltonian cycle, upon entering the subgraph, must traverse every vertex before exiting. It is also trivial to check that it is possible
to travel from any incoming external edge to any outgoing external edge. With this understanding in mind, we can easily see that replacing vertices by any of the above subgraphs does not alter Hamiltonicity. It is also interesting to note that the number of Hamiltonian cycles does not grow when a vertex is replaced by one of the above subgraph.

We now consider a conversion using the above three subgraphs. Suppose that a general HCP instance is given. It will be assumed that every edge in the graph is directed, and so any undirected edges should be thought of as two individual directed edges. The objective will be to replace all vertices of large in-degree or large out-degree (or both) with subgraphs, such that the final graph has maximum in-degree and maximum out-degree below a fixed constant, say $d$. We will choose $d = 4$, but as will become clear later, $d$ may be chosen for any integer value of 4 or above.

Consider a particular vertex $v$ in the graph, with in-degree $s$ and out-degree $r$, where $\max(s,r) > d$. We may replace this vertex with a subgraph by a procedure called the Splitting Procedure, which we outline below. Note that throughout the Splitting Procedure we refer to replacing vertices with the subgraphs described above. This should be done in such a way that the incoming edges adjacent to the replaced vertex form the incoming external edges in the subgraph, and likewise for the outgoing edges. For the in-split, there are two sets of incoming external edges, and so the incoming edges adjacent to the replaced vertex should be shared equally (or different by one, if there is an odd number) between these two sets. Likewise, for the out-split, there should be an equal share of the outgoing edges in each of the two sets of outgoing external edges. An example of such a replacement, using an in-split, is displayed in Figure \ref{fig-in_split_replaced}.

\begin{figure}[h!]
\centering\hspace*{-0.5cm}\includegraphics[scale=0.67]{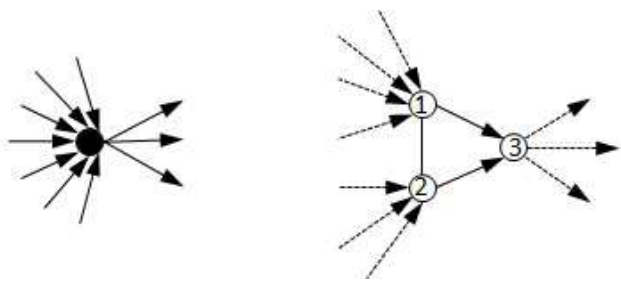}
\caption{A vertex with in-degree 7 and out-degree 3, and the corresponding in-split that replaces it. Note that the 7 incoming edges have been shared between vertices 1 and 2.\label{fig-in_split_replaced}}
\end{figure}

{\underline{Splitting Procedure}}

\begin{itemize}\item Replace vertex $v$ with a split.
\item While any vertex in the subgraph has in-degree greater than $d$, replace that vertex with an in-split.
\item While any vertex in the subgraph has out-degree greater than $d$, replace that vertex with an out-split.
\end{itemize}

\begin{lemma}The Splitting Procedure terminates in finite time for any $d \geq 4$.\label{lem-splitting_converges}\end{lemma}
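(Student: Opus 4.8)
The plan is to track how the in-degree and out-degree of each vertex change under the three replacement operations, and to exhibit a monovariant that strictly decreases at every step of each while-loop. First I would record the degree bookkeeping. Replacing a vertex $v$ of in-degree $s$ and out-degree $r$ by a split produces exactly one vertex of in-degree $s$ (with out-degree $1$) and one vertex of out-degree $r$ (with in-degree $1$); every later operation then acts on a single vertex of the growing subgraph. Replacing a vertex $w$ of in-degree $\delta$ by an in-split distributes the incoming edges between its vertices $1$ and $2$, giving them in-degrees $\lceil \delta/2 \rceil + 1$ and $\lfloor \delta/2 \rfloor + 1$ (the extra $+1$ arising from the internal edge pair between vertices $1$ and $2$), while vertex $3$ inherits $w$'s out-degree and has in-degree $2$. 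Symmetrically, an out-split applied to a vertex of out-degree $\rho$ creates two vertices of out-degree $\lceil \rho/2 \rceil + 1$ and $\lfloor \rho/2 \rfloor + 1$, with the remaining vertex inheriting the in-degree and having out-degree $2$.

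With this accounting in hand, I would treat the two loops separately. For the in-degree loop, the key inequality is that whenever $\delta > d \geq 4$ we have $\lceil \delta/2 \rceil + 1 < \delta$, so each in-split strictly lowers the in-degree of the vertex it replaces while producing only vertices of in-degree at most $\lceil \delta/2 \rceil + 1$. Hence, ordering the subgraph by its largest in-degree together with the number of vertices attaining it, each in-split strictly decreases this quantity; since it is bounded below while the loop runs and there are finitely many vertices at each level, the loop halts after finitely many steps. Crucially, I would also read off from the bookkeeping that an in-split creates no vertex of out-degree exceeding $2$: vertex $3$ only ever inherits an out-degree of $1$ or $2$ during this phase (the only vertices in-split in the first loop are the split's first vertex, of out-degree $1$, and in-split vertices of out-degree $2$). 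Thus the first loop never manufactures a new high out-degree vertex.

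For the out-degree loop, the argument is the mirror image: the same inequality $\lceil \rho/2 \rceil + 1 < \rho$ for $\rho > d \geq 4$ shows that each out-split strictly lowers the out-degree it acts upon, so this loop terminates by the analogous monovariant. The subtle point, which I expect to be the main obstacle, is to confirm that the second loop does not undo the work of the first — that is, that no out-split raises any in-degree above $d$. This follows from the bookkeeping: an out-split leaves the in-degree unchanged on one vertex (vertex $1$) and sets the in-degrees of the other two vertices to $2 \leq d$. Since every vertex already has in-degree at most $d$ once the first loop has finished, the in-degree bound is preserved throughout the second loop.

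Combining the finiteness of the two loops with the initial split yields termination for every $d \geq 4$. The hypothesis $d \geq 4$ is used precisely to guarantee the two halving inequalities above for all degrees actually encountered (those strictly exceeding $d$, hence at least $5$); were $d$ too small, a vertex of in-degree $3$ would be re-split into vertices of in-degree $3$, and the procedure could loop forever.
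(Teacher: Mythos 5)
Your proof is correct and takes essentially the same route as the paper: the same degree bookkeeping for the split, in-split and out-split, and the same key inequality $\lceil \delta/2 \rceil + 1 < \delta$ for $\delta > d \geq 4$ showing that each while-loop strictly reduces the degrees it targets (you additionally verify, where the paper leaves it implicit, that the second loop cannot recreate in-degrees above $d$). One minor repair: your stated monovariant (largest in-degree, number of vertices attaining it) is unchanged whenever the loop splits a non-maximal vertex, so replace it with the multiset of in-degrees exceeding $d$ under the well-founded multiset order, which does strictly decrease at every in-split because each newly created vertex has in-degree strictly below that of the vertex it replaces.
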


\begin{proof}Consider a vertex with in degree $s$ and out-degree $r$. The first step of the Splitting Procedure occurs only once. After it has concluded, the first vertex in the split will have in-degree $s$ and out-degree 1, and the second vertex will have in-degree 1 and out-degree $r$. It is important to note here that neither of these vertices will have both in-degree and out-degree greater than $d$.

Next, the second step runs for as long as vertices with in-degree greater than $d$ exist. Initially there can only be one vertex (the first vertex) that satisfies this requirement. Then, each time a vertex $i$ with in-degree $s_i$ is replaced with an in-split, the maximum in-degree of the replacing subgraph is $\lceil \frac{s_i}{2} \rceil + 1$. It is easy to see that the resultant in-degree will shrink by an integer amount for any $s_i \geq 4$. Therefore step 2 will conclude in finite time for any $d \geq 4$.

The final step is equivalent to the second step, except the arguments involve the out-degree. Therefore, this step will also conclude for $d \geq 4$, and the algorithm will therefore terminate in finite time.\end{proof}

Since the Splitting Procedure terminates in finite time, and cannot alter the Hamiltonicity of the graph, the Splitting Procedure is guaranteed to convert any vertex of large in-degree or large out-degree (or both) to an equivalent subgraph of maximum in-degree and out-degree of $d$. Then if this is performed on all vertices in the graph, the resultant graph instance is equivalent to the original graph instance, but has in-degree and out-degree bounded above by $d$. The only remaining task is to determine how large the resultant graph will be after the Splitting Procedure is applied to all vertices. In the following proposition we consider one vertex $v$ in a graph, and show that the number of vertices in the replacing subgraph is a linear function of the in-degree and out-degree of $v$.

\begin{proposition}Consider a vertex $v$ with in-degree $s$ and out-degree $r$. After the Splitting Procedure is completed for a chosen $d \geq 4$, the resultant subgraph that replaces $v$ has $O(\max(s,r))$ vertices and $O(\max(s,r))$ edges.\label{prop-degree_d}\end{proposition}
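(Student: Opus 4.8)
The plan is to analyze the Splitting Procedure as a recursive tree of replacements and bound the total number of vertices produced by summing the vertices introduced at each level of recursion. The key observation from Lemma~\ref{lem-splitting_converges} is that the in-split and out-split operations each roughly halve the offending degree: replacing a vertex of in-degree $s_i$ by an in-split produces vertices of in-degree at most $\lceil s_i/2 \rceil + 1$. This halving behaviour is exactly what drives the analysis, so I would first separate the two phases of the procedure (the in-split phase acting on the first split vertex of in-degree $s$, and the out-split phase acting on the second split vertex of out-degree $r$) and treat them independently, since each split operation touches at most one vertex with degree exceeding $d$ at a time.

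First I would fix attention on the in-split phase. The initial split creates a vertex of in-degree $s$, and repeated in-splitting builds a binary tree in which a node handling in-degree $m$ spawns children handling in-degrees roughly $\lceil m/2 \rceil + 1$ and $\lfloor m/2 \rfloor + 1$. Each in-split adds a constant number of vertices (three) and a constant number of internal edges. The crucial counting fact is that the leaves of this tree correspond to a partition of the original $s$ incoming edges into groups of size at most $d$, so there are $O(s)$ leaves, and since the tree is (approximately) balanced with branching factor 2, the total number of internal nodes is also $O(s)$. Hence the in-split phase contributes $O(s)$ vertices and $O(s)$ edges. By the symmetric argument applied to the out-degree, the out-split phase contributes $O(r)$ vertices and $O(r)$ edges.

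Summing the two phases together with the single split and the $O(1)$ overhead of the original vertex gives $O(s) + O(r) = O(\max(s,r))$ vertices and likewise $O(\max(s,r))$ edges, which is the claimed bound. I would present the edge count as a direct consequence of the vertex count, since each introduced split, in-split, or out-split contributes only a bounded number of internal edges and the external edges are simply inherited from the $s + r$ edges originally adjacent to $v$.

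The main obstacle I anticipate is making the tree-counting argument rigorous rather than merely asserting $O(s)$ leaves. The subtlety is the $+1$ term in the recurrence $m \mapsto \lceil m/2 \rceil + 1$: a naive halving would give a clean geometric sum, but the additive constant means the recursion does not terminate at $m = 1$ and one must verify that the degree still strictly decreases at every step down to the threshold $d$ (which Lemma~\ref{lem-splitting_converges} guarantees for $d \geq 4$) and that the number of levels is $O(\log s)$ while the number of leaves remains $O(s)$. The cleanest way to handle this is to bound the recursion depth by $O(\log s)$ and observe that the number of edges being partitioned is conserved across each split, so the leaf count cannot exceed the number of original edges; the additive $+1$ only inflates the per-node degree by a constant and never increases the number of edges to be distributed, so it does not affect the leading-order vertex count.
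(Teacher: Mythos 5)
Your proposal follows essentially the same route as the paper. The paper's proof is exactly the level-by-level version of your recursion tree: at stage $i$ there are at most $2^i$ new in-splits/out-splits, the maximum offending degree obeys $d_i \leq d_{i-1}/2 + 1.5$, hence $d_i \leq s/2^i + 3$, so at most $n \approx \log_2(s) - \log_2(d-3)$ stages are needed, and the geometric sum $\sum_{i=1}^{n-1} 2^i + 3\cdot 2^n = 2^{n+2}-1 \leq 8s/(d-3) - 1$ gives $O(s)$ vertices (and $O(r)$ when $r > s$), with the edge count following from bounded degree, just as you argue.

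One sub-claim in your patch is inaccurate, though harmlessly so. It is not true that ``the number of edges being partitioned is conserved across each split,'' nor that every leaf of the tree holds at least one of the original $s$ edges. Each in-split injects its two internal edges into the very pool that is halved at the next level --- that is precisely where the $+1$ in the recurrence comes from --- so internal edges accumulate down the tree, and with adversarial assignments a leaf can receive internal edges only. For example, a depth-$2$ vertex of in-degree $5$ consisting of $3$ original and $2$ internal edges can pass both internal edges to one child, which together with its own new internal edge has in-degree $3 \leq d$ and holds no original edge. Hence the partition argument alone does not bound the leaf count by $s$. Fortunately your other argument --- recursion depth $\log_2(s) + O(1)$ combined with binary branching --- is exactly the paper's count and suffices on its own; note that here you genuinely need the depth bound with leading constant $1$ in front of $\log_2(s)$, not merely $O(\log s)$, since a binary tree of depth $c\log_2(s)$ has $O(s^c)$ nodes. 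If you prefer the conservation-style argument, it can be repaired: with $I$ in-splits there are $I+1$ leaves, each of in-degree at least $3$, and the total in-degree landing on leaves is exactly $s + 2I$, whence $s + 2I \geq 3(I+1)$ and $I \leq s - 3$, giving $O(s)$ vertices directly.
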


\begin{proof}After the first step of the Splitting Procedure, the subgraph has 2 vertices. This includes a vertex with in-degree $s$ and out-degree 1, and a vertex with out-degree $r$ and in-degree 1. It is important to note here that there are no vertices in the subgraph that have both in-degree greater than $d$ and out-degree greater than $d$. Call this process "Stage 0".

In the second step of the Splitting Procedure, we may replace the first vertex with an in-split (if $s > d$), and in the third step we may replace the third vertex with an out-split (if $r > d$). Call this process "Stage 1". At this point the subgraph has at most six vertices. Two of them, one from the in-split, and one from the out-split, will require no further work, as they will all have in-degree and out-degree less than $d$. So at this point there are at most four vertices that require additional work. The in-splits will contain two vertices with potentially large in-degree, and the out-splits will contain two vertices with potentially large out-degree. Consider the in-split. Vertices 1 and 2 will each have half as many incoming edges as the vertex they replaced, possibly rounded up (if the number is odd), plus one more incoming edge from the other vertex. An equivalent argument can be made for the out-split. Then it is clear that each of the four vertices still requiring work has maximum in-degree or out-degree bounded above by $\frac{s}{2} + 1.5$ or $\frac{r}{2} + 1.5$. To simplify expressions, we will assume that $s \geq r$, and so in the worst case all four vertices have in-degree or out-degree $\frac{s}{2} + 1.5$. Of course, the arguments that proceed are identical if $s < r$, with all instances of $s$ replaced by $r$.

We then replace the four vertices with appropriate in-splits and out-splits. Call this process "Stage 2". Using the same argument as above, we see that the maximum in-degree or out-degree of the in-splits and out-splits used is $\frac{\frac{s}{2} + 1.5}{2} + 1.5$, and in each of the in-splits and out-splits there are at most two vertices that have in-degree or out-degree greater than $d$.

We can then continue the process of replacing high-degree vertices with in-splits and out-splits, and it is easy to check that at Stage $i$ there are $2^i$ new subgraphs introduced, and so there are at most $2^{i+1}$ vertices that still require work. Denote by $d_i$ the maximum in-degree or out-degree of vertices at Stage $i$. Then it is clear that $d_i \leq \frac{d_{i-1}}{2} + 1.5$, with $d_0 = s$. Solving this recursive relationship, we obtain that $d_{i} \leq \frac{s}{2^i} + 3$. Therefore, the maximum in-degree or out-degree after $n$ iterations, in the worst case, is something less than $\frac{s}{2^{n}} + 3$. We can then see how many iterations are necessary to reduce all vertices in the subgraph to maximum in-degree and out-degree to $d = 4$ or lower:

\begin{eqnarray*}\frac{s}{2^{n}} + 3 & \leq & d\\
\log_2(s) - (n) & \leq & \log_2(d-3)\\
n & \geq & \log_2(s) - \log_2(d-3)\end{eqnarray*}

So for all iterations $n \geq \log_2(s) - \log_2(d-3)$, the subgraph has all vertices with in-degree and out-degree $d$ or lower. So in the worst case, we will not need to consider more than $\log_2(s) + 1$ iterations.

We can now see how many vertices exist in the final subgraph. At each stage $1 \leq i \leq n-1$ we introduced $2^i$ new subgraphs (in-splits and out-splits), which each included a vertex that we did not need to further convert. Then, in stage $n$ there were $3(2^n)$ vertices introduced. Therefore the total number of vertices is at most:

$$\left[ \sum_{i=1}^{n-1} 2^i \right] + 3(2^n) = 2^{n+2} - 1 \leq \frac{8s}{d-3} - 1,$$

and therefore, the number of vertices in the final subgraph is of order $O(s)$. Since each vertex has bounded degree, it is clear that the number of edges is also $O(s)$.

Of course, as mentioned earlier, if $s < r$ then the above bounds are $O(r)$ instead, which leads to the result.\end{proof}

It is trivial to see that replacing a vertex with a subgraph, using the above procedure, does not alter the in-degree or out-degree of any other vertex in the graph. Then we may simply perform the above procedure for all vertices
in the graph with in-degree or out-degree greater than $d$. Suppose that, in the original instance containing $N$ vertices, vertex $i$ has maximum in-degree or out-degree $d_i$, and define $k := \sum_{i = 1}^N d_i$. It is clear that $k$ at least as big as the number of directed edges in the graph. Then, when the Splitting Procedure is applied to all vertices in the graph, there will be $O(k)$ vertices of in-degree and out-degree no larger than $d$.

Finally, we present the main theorem of this manuscript.

\begin{theorem}Using the procedures described in this manuscript, one may convert any general HCP instance to an equivalent (undirected) cubic HCP instance containing $O(k)$ vertices and $O(k)$ edges, where $k$ is defined as above.\end{theorem}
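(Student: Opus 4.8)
The plan is to realise the required conversion as a composition of a constant number of conversions, each of which has already been shown (or is immediately seen) to be linearly-growing, and to track the size in terms of $k$ throughout. The decisive observation is that applying the $s$-gate conversion directly to the original instance is only quadratically-growing (Lemma \ref{lem-quad}); the entire role of the Splitting Procedure is to first trade arbitrary in-/out-degree for bounded degree at a cost of only $O(k)$, after which the gate conversion becomes linearly-growing by Corollary \ref{col-linear_conversion}.

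First I would apply the Splitting Procedure to every vertex of the given general HCP instance $\Gamma$ on $N$ vertices. By Proposition \ref{prop-degree_d}, a vertex $v$ of in-degree $s_v$ and out-degree $r_v$ is replaced by a subgraph with $O(\max(s_v,r_v)) = O(d_v)$ vertices and $O(d_v)$ edges, and this replacement leaves the in-/out-degrees of all other vertices unchanged. Summing over all vertices and using $k = \sum_{i=1}^N d_i$, the resulting instance $\Gamma'$ has $\sum_v O(d_v) = O(k)$ vertices and $O(k)$ edges. By Lemma \ref{lem-splitting_converges} the procedure terminates, and by construction $\Gamma'$ is an equivalent general HCP instance whose maximum in-degree and maximum out-degree are both at most the fixed constant $d$ (we take $d=4$).

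Next, since $\Gamma'$ has bounded in-/out-degree, I would invoke the reasoning behind Corollary \ref{col-linear_conversion} to convert $\Gamma'$ to an undirected cubic HCP instance $\Gamma''$ by a linearly-growing conversion. Concretely, one may first apply Karp's General-to-Undirected conversion (linearly-growing, and raising the maximum degree only from $d$ to $d+1$, which remains a fixed constant), and then replace each vertex of degree at least $4$ by the appropriate gate and each resulting degree-$2$ vertex by a diamond. Because the degree is bounded by the fixed constant $d+1$, each gate contains $O(1)$ vertices and edges (an $s$-gate having $O(s^2)$ of each with $s$ bounded), so these replacements multiply the size by only a constant factor; applied to the $O(k)$-vertex instance $\Gamma'$, they yield $\Gamma''$ with $O(k)$ vertices and $O(k)$ edges. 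Each of these steps preserves Hamiltonicity, as established in the preceding sections.

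Finally I would assemble the pieces: the overall map $\Gamma \mapsto \Gamma''$ is a composition of constantly many conversions, each preserving the existence of a Hamiltonian cycle, so $\Gamma''$ is Hamiltonian if and only if $\Gamma$ is, and every stage produces an instance of size $O(k)$. The hard part will be arguing, rather than merely citing, that the two substantial stages chain to $O(k)$ instead of reintroducing the quadratic blow-up of Lemma \ref{lem-quad}: this relies precisely on the Splitting stage producing an instance that is \emph{simultaneously} of size $O(k)$ and of bounded degree, so that the subsequent gate-and-diamond conversion contributes only a constant multiplicative factor rather than a factor that grows with the (now bounded) degrees. Granting this, $\Gamma''$ is an undirected cubic HCP instance equivalent to $\Gamma$ with $O(k)$ vertices and $O(k)$ edges, as claimed.
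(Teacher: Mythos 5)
Your proposal is correct and follows exactly the paper's route: the paper's own proof is simply the one-line observation that the result ``follows immediately from Proposition \ref{prop-degree_d} and Corollary \ref{col-linear_conversion},'' which is precisely the two-stage chain (Splitting Procedure to get an equivalent instance that is simultaneously of size $O(k)$ and of bounded in-/out-degree, then the gate-and-diamond conversion, which is a constant-factor blowup under bounded degree) that you spell out. Your version merely makes explicit the intermediate bookkeeping that the paper leaves implicit, so there is nothing to correct.
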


\begin{proof}The result follows immediately from Proposition \ref{prop-degree_d} and Corollary \ref{col-linear_conversion}. \end{proof}

Since the number of vertices in the original instance will definitely not be greater than the number of directed edges (or else the instance is trivially non-Hamiltonian), we can easily see that the input size of the original instance is of order $O(k)$. Then the upcoming procedure constitutes a linearly-growing conversion from general HCP to cubic HCP. In order to relate this result to the classical \lq\lq SAT to 3SAT" result of Karp \cite{karp}, which is also a linearly-growing conversion, we label the following as the \lq\lq HCP to 3HCP Conversion Procedure".

{\underline{HCP to 3HCP Conversion Procedure}}

\begin{itemize}\item Select a value of $d \geq 4$.
\item Perform the Splitting Procedure on any vertex with in-degree or out-degree greater than $d$.
\item While any vertices exist with combined in-degree and out-degree $s + r \geq 4$, replace them with an $(s,r)$-gate.
\item Convert the directed graph to an undirected graph.
\item Replace all degree 4 vertices with 4-gates using the Sub-quartic HCP to Sub-cubic HCP Conversion Procedure.
\item Convert the sub-cubic graph to a cubic graph using the Sub-cubic HCP to Cubic HCP Conversion Procedure.
\end{itemize}

Note that in the above procedure, we may instead choose to swap the third and fourth items. If so, the fifth step may be omitted.

In reality, the HCP to 3HCP Conversion Procedure as described above is needlessly heavy-handed. Although the proof of Proposition \ref{prop-degree_d} is only valid for $d \geq 4$, it can be checked that once the maximum in-degree or out-degree in a replacing subgraph has been reduced to 4, using one more iteration of in-splits and out-splits will reduce the maximum in-degree and out-degree to 3 (the lowest it may be reduced to). Clearly this one additional iteration of replacements can only double the size of the resultant graph, so the conversion remains linearly-growing. If the graph is then converted to an undirected graph at this point, the maximum degree will be 4, at which point 4-gates may be used. We summarise this alteration in the following procedure, which we call the \lq\lq Quick HCP to 3HCP Conversion Procedure".

{\underline{Quick HCP to 3HCP Conversion Procedure}}

\begin{itemize}\item Perform the Splitting Procedure on any vertex with in-degree or out-degree greater than $3$.
\item Convert the directed graph to an undirected graph.
\item Replace any degree 4 vertices with 4-gates using the Sub-quartic HCP to Sub-cubic HCP Conversion Procedure.
\item Convert the sub-cubic graph to a cubic graph using the Sub-cubic HCP to Cubic HCP Conversion Procedure.
\end{itemize}

We recommend that interested readers  use the Quick HCP to 3HCP Conversion rather than the original HCP to 3HCP Conversion Procedure, both for ease of implementation, and for the sake of the size of the resultant instance. We provide a bound on the size of instances produced by this conversion at the end of this section.

We now display, in Figures \ref{fig-example1}--\ref{fig-example4}, a vertex with in-degree 9 and out-degree 6, and the various intermediate steps as it is converted to an equivalent cubic subgraph by the Quick HCP to 3HCP Conversion Procedure. A path through the fourth incoming external edge and the fifth outgoing external edge is displayed at each stage. The final cubic subgraph contains 269 vertices.

\begin{figure}[h!]
\centering\hspace*{-0.5cm}\includegraphics[scale=0.45]{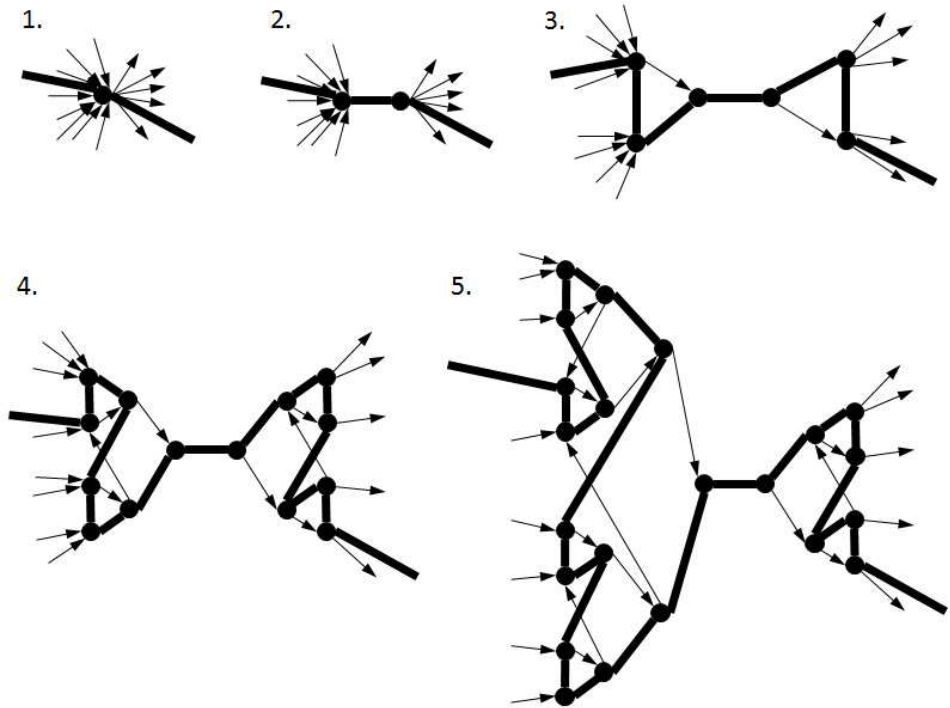}
\caption{A vertex with in-degree 9 and out-degree 6, and the subgraphs produced during the step 1 of the Quick HCP to 3HCP Conversion Procedure.\label{fig-example1}}
\end{figure}
\vspace*{1cm}
\begin{figure}[h!]
\centering\hspace*{-0.5cm}\includegraphics[scale=0.45]{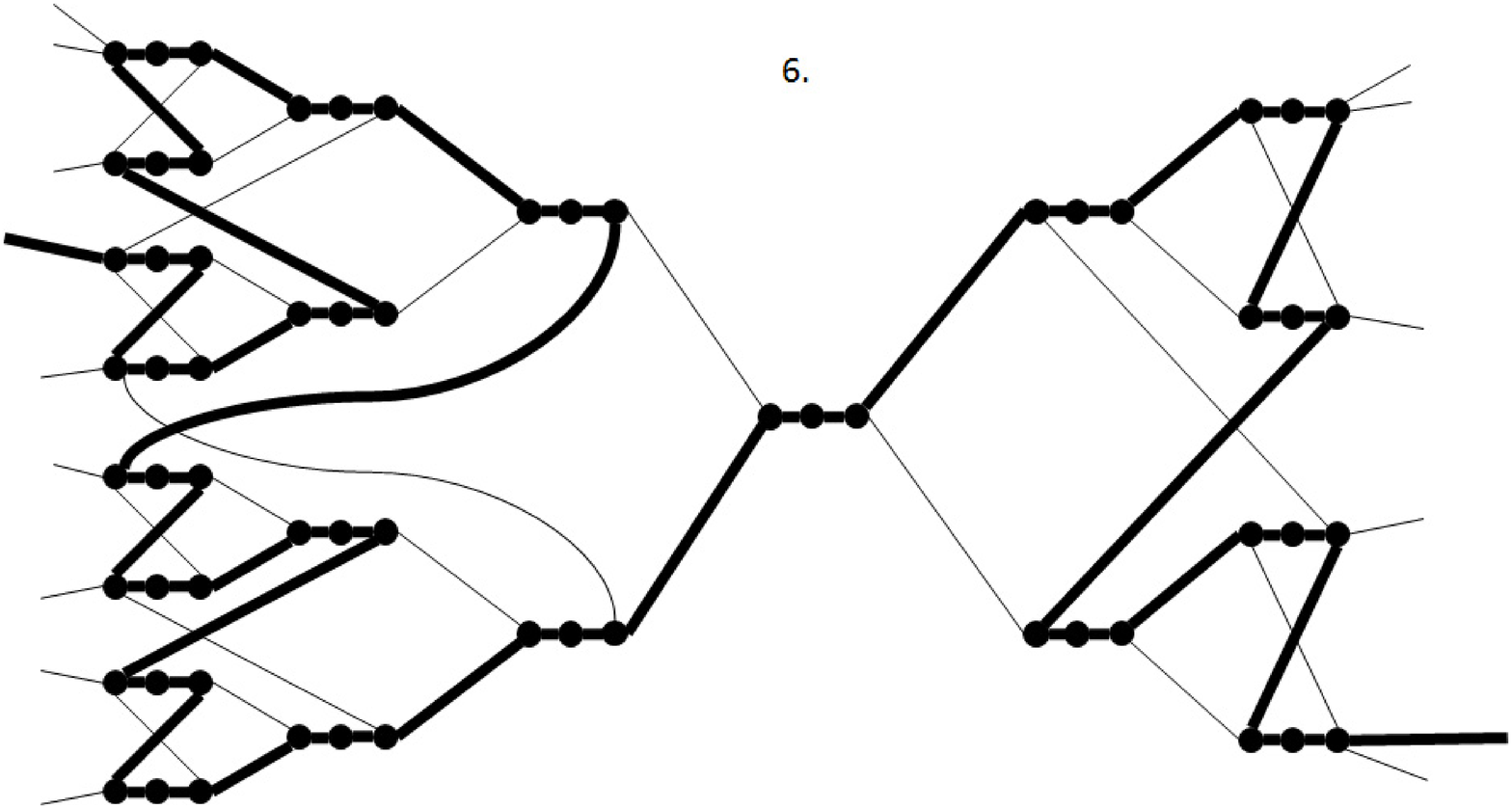}
\caption{The sub-quartic, undirected subgraph produced during step 2 of the Quick HCP to 3HCP Conversion Procedure.\label{fig-example2}}
\end{figure}
\newpage
\begin{figure}[h!]
\centering\hspace*{-0.5cm}\includegraphics[scale=0.52]{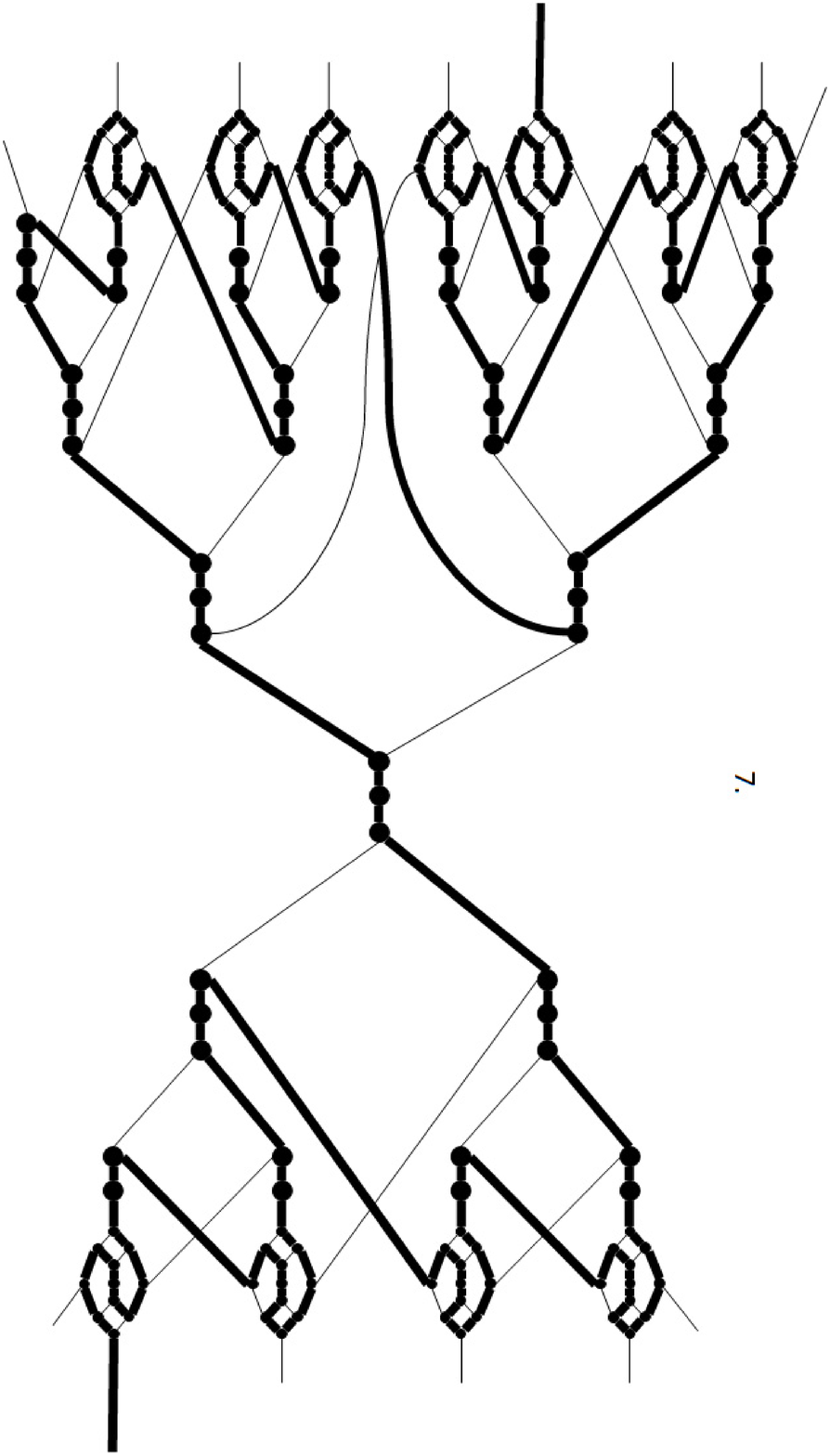}
\caption{The sub-cubic, undirected subgraph produced during step 3 of the Quick HCP to 3HCP Conversion Procedure.\label{fig-example3}}
\end{figure}
\newpage
\begin{figure}[h!]
\centering\hspace*{-0.5cm}\includegraphics[scale=0.52]{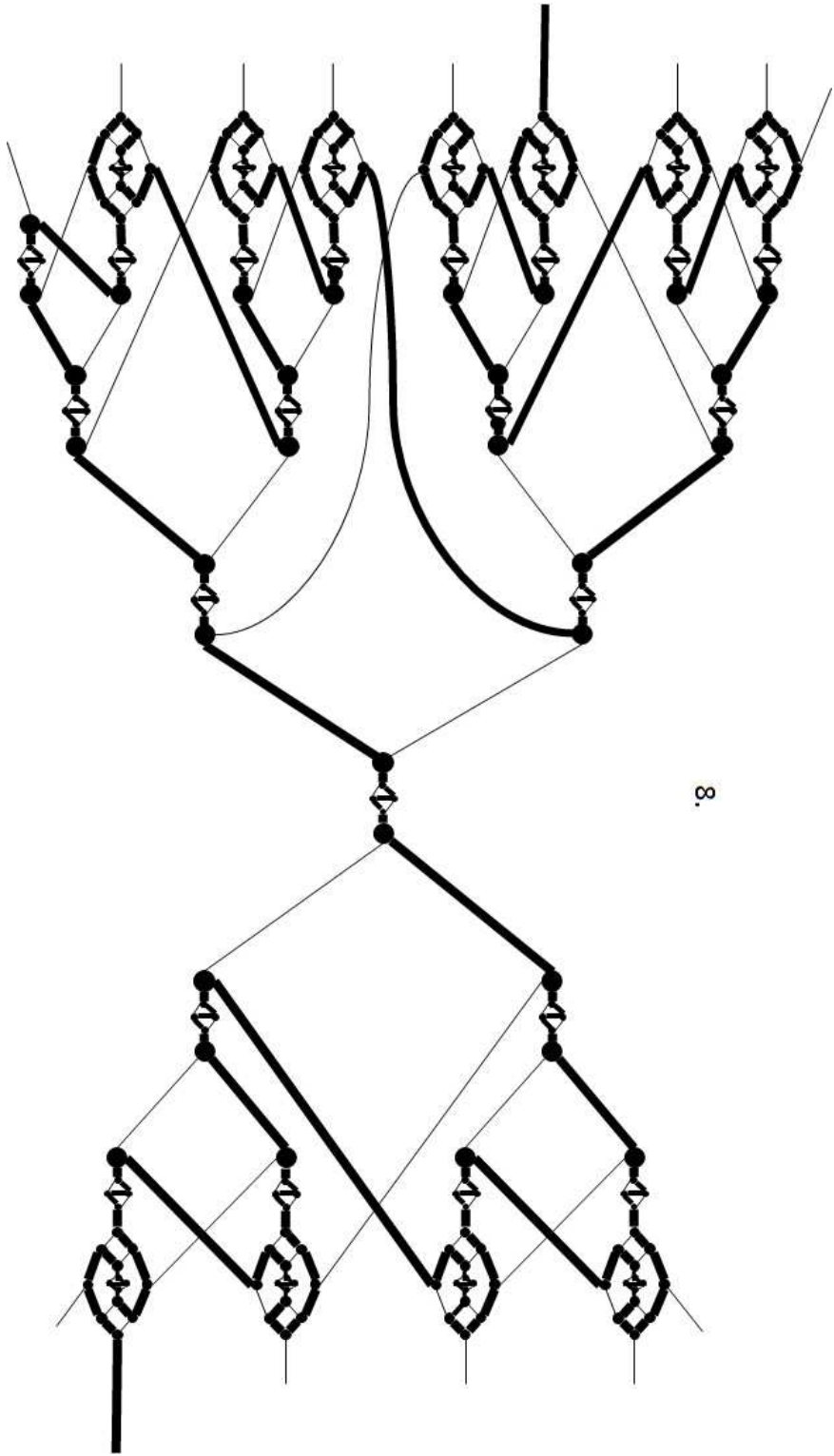}
\caption{The final, cubic, undirected subgraph produced by the Quick HCP to 3HCP Conversion Procedure.\label{fig-example4}}
\end{figure}
\newpage

We now conclude this section with a revised upper bound on the size of the cubic instance obtained from the Quick HCP to 3HCP Conversion Procedure.

\begin{lemma}Consider a graph $\Gamma$, and denote by $k$ the sum of in-degrees and out-degrees of all vertices in $\Gamma$. Then the instance obtained after performing the Quick HCP to 3HCP Conversion Procedure will contain no more than $25k$ vertices.\label{lem-final-size}\end{lemma}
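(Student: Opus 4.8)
The plan is to bound the size of the final cubic graph by following the four steps of the Quick HCP to 3HCP Conversion Procedure in order, expressing the vertex count after each step as an explicit multiple of $k$. Because the subgraphs that ultimately replace distinct original vertices remain vertex-disjoint throughout (original edges only ever become between-gadget edges), it is cleanest to fix a single vertex $v$ of in-degree $s$ and out-degree $r$, bound the number of vertices in the subgraph that finally replaces it by a linear function of $s+r$, and then sum over all vertices using $\sum_v (s_v+r_v) = k$. Since every vertex of a nontrivially-Hamiltonian instance has $s_v+r_v \ge 2$, any per-vertex additive overhead is easily absorbed into the leading constant; for instance the smallest gadget, from a vertex of in- and out-degree $1$, yields only twelve vertices against a budget of $25(s+r)=50$.

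First I would bound the output of the Splitting Procedure (step 1). Rather than invoking the worst-case estimate of Proposition \ref{prop-degree_d}, which is far too lossy for the present purpose, I would count the split, in-split and out-split gadgets directly: the binary tree of in-splits that reduces an in-degree of $s$ to at most $3$ contributes a number of vertices linear in $s$ with a small constant, and symmetrically for the out-splits and $r$. This yields a graph $G_1$ of maximum in-degree and out-degree $3$ whose vertex count is a small multiple of $k$, and I would record both this count and the degree distribution of $G_1$, since the latter controls the cost of the remaining steps.

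Next I would propagate $G_1$ through steps 2--4, charging each replacement only to the vertices it actually touches. Converting $G_1$ to an undirected graph triples the vertex count and produces maximum degree $4$, with the middle vertex of each triple having degree exactly $2$ and the degree-$4$ vertices being precisely the images of the endpoints of in- or out-degree $3$. I would then apply $4$-gates (eleven vertices, and, by the $s=4$ case of the earlier $s$-gate count, exactly one new degree-$2$ vertex apiece) only to these degree-$4$ vertices, and diamonds (four vertices apiece) only to the degree-$2$ vertices. The key point is that the number of degree-$4$ and of degree-$2$ vertices is itself a small multiple of $k$, so each of these expansions contributes only $O(k)$ additional vertices rather than multiplying the whole graph; summing the four contributions gives the final count as an explicit multiple of $k$.

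The main obstacle is holding the constant down to $25$. A crude product of the per-step worst-case expansion factors---roughly three for undirecting, eleven for $4$-gates and four for diamonds, on top of the splitting---overshoots by well over an order of magnitude, and even combining the worst-case splitting bound of Proposition \ref{prop-degree_d} with the mere tripling of step 2 already exceeds $25k$. The delicate work is therefore twofold: obtaining a tight (not worst-case) count of the splitting gadgets, and bounding the sizes of the degree-$4$ and degree-$2$ vertex sets that drive the $4$-gate and diamond replacements, so that steps 2--4 contribute additively with small constants. I would finish by collecting these linear-in-$k$ terms and checking their sum is at most $25k$, using the worked example of Figures \ref{fig-example1}--\ref{fig-example4} (in-degree $9$, out-degree $6$, giving $269 \le 25 \cdot 15$ vertices) as a consistency check.
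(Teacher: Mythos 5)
Your plan follows the paper's own proof step for step: a stage-by-stage vertex count in which the $4$-gates are charged only to the degree-$4$ vertices and the diamonds only to the degree-$2$ vertices, so that steps 3 and 4 contribute additively rather than multiplicatively. Your tactical observations (the tripling in step 2, the $11$ vertices and single degree-$2$ vertex per $4$-gate, the $4$ vertices per diamond, and the fact that the worst-case bound of Proposition \ref{prop-degree_d} is far too lossy to give $25k$) are all correct and are exactly the ones the paper uses. The genuine gap is that the proposal defers precisely the computations that constitute the proof: you never derive an explicit constant for the output of the Splitting Procedure, never derive explicit bounds on the sizes of the degree-$4$ and degree-$2$ sets, and your closing step is literally ``check that the sum is at most $25k$'' --- which is the entire content of the lemma. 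To close the gap one needs the following chain, which is the paper's proof. After step 1, the subgraph replacing a vertex of in-degree $s$ and out-degree $r$ has at most $2(s+r)$ vertices (the paper argues that at most one vertex is adjacent to each external edge, and that a majority of the gadget's vertices are adjacent to external edges; your binary-tree count gives the sharper $2(s+r)-10$ for a doubly split vertex, which serves equally well), so the whole graph has at most $2k$ vertices, of which at most $k$ can later acquire degree $4$, since only vertices adjacent to external edges can have in- or out-degree $3$ and there are at most $k$ such vertices in total. Step 2 then yields at most $6k$ vertices, at most $k$ of degree $4$ and at most $2k$ of degree $2$; step 3 yields at most $6k - k + 11k = 16k$ vertices, at most $2k + k = 3k$ of degree $2$; and step 4 yields at most $16k - 3k + 4(3k) = 25k$ vertices.

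One of the bounds you would still have to prove is subtler than your sketch suggests. In describing step 2 you identify the degree-$2$ vertices with ``the middle vertex of each triple,'' but the outer vertices of a triple also have degree $2$ whenever the corresponding in-degree or out-degree equals $1$; indeed your own example of a vertex with in- and out-degree $1$ produces three degree-$2$ vertices, all of which must be replaced by diamonds (that is where your count of twelve comes from). Consequently the bound of $2k$ on the number of degree-$2$ vertices cannot be obtained by counting middle vertices alone; it needs a per-vertex accounting, e.g.\ an unsplit vertex contributes at most $3 \le 1.5(s_i+r_i)$ degree-$2$ vertices since $s_i+r_i \ge 2$, while a split vertex contributes at most its gadget size plus two (the splitting tree contains one vertex of in-degree $1$ and one of out-degree $1$), which is still below $2(s_i+r_i)$. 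The $25k$ bound survives this correction, but a proof that charges diamonds only to the middle vertices of triples would undercount and would therefore be wrong as stated.
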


\begin{proof}Consider a single vertex in $\Gamma$, with in-degree $s$ and out-degree $r$. It is easy to check that the subgraph which replaces this vertex after step 1 of the Quick HCP to 3HCP Conversion Procedure (e.g. see Figure \ref{fig-example1}) will contain no more than one vertex adjacent to each external edge. It can also be easily checked that there will be more vertices adjacent to external edges than those not adjacent. Therefore an upper bound on the number of vertices in the replacing subgraph is $2(s+r)$, and so, once step 1 is carried out for all vertices in $\Gamma$, the number of vertices in the resultant subgraph will be no more than $2k$. Note that at this stage, there are at most $k$ vertices with maximum in-degree or out-degree of 4.

In step 2, the number of vertices is tripled, so there are at most $6k$ vertices. However, there are still only at most $k$ vertices with maximum in-degree or out-degree 4. Also, at most $2k$ vertices have degree 2.

In step 3, we replace $k$ vertices with 4-gates, which each contain 11 vertices, including one vertex of degree 2. So at this point the number of vertices is no more than $16k$, of which at most $3k$ vertices have degree 2.

In step 4, we replace at most $3k$ vertices with diamonds, which each contain 4 vertices. So after the Quick HCP to 3HCP Conversion Procedure is completed, the graph contains at most $25k$ vertices.\end{proof}

For undirected graphs containing $e$ edges, it is trivial to see that $k = 4e$. The following corollary is this obvious, and does away with the need to count the in-degree and out-degree of each vertex.

\begin{corollary}If $\Gamma$ is an undirected graph containing $e$ undirected edges, the cubic graph resulting from the Quick HCP to 3HCP Conversion Procedure contains no more than $100e$ vertices and $150e$ edges.\label{cor-undirected}\end{corollary}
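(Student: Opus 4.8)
The plan is to reduce this corollary directly to Lemma \ref{lem-final-size} by computing the quantity $k$ for an undirected graph, and then to exploit the cubic structure of the output to bound the edge count.

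First I would establish the claimed relation $k = 4e$. Recall that the conversion procedure treats every undirected edge as two opposing directed edges, so an undirected graph with $e$ edges corresponds to a directed graph with $2e$ directed edges. Since $k$ is defined as the sum over all vertices of (in-degree $+$ out-degree), and since the sum of all in-degrees equals the number of directed edges, as does the sum of all out-degrees, we obtain $k = 2e + 2e = 4e$. This is the only genuine observation needed, and it is exactly the ``trivial'' fact asserted in the sentence preceding the corollary statement.

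Next I would apply Lemma \ref{lem-final-size}, which bounds the number of vertices in the final cubic instance by $25k$. Substituting $k = 4e$ immediately yields a bound of $25 \cdot 4e = 100e$ vertices, giving the first half of the claim.

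Finally, to bound the number of edges, I would invoke the fact that the output graph is cubic: every vertex has degree exactly $3$. By the handshake lemma, a cubic graph on $V$ vertices has exactly $\tfrac{3V}{2}$ edges. Since $V \leq 100e$, the number of edges is at most $\tfrac{3}{2}(100e) = 150e$, completing the proof. I expect no real obstacle here; the only point requiring any care is the bookkeeping that justifies $k = 4e$, and the rest is a one-line substitution into Lemma \ref{lem-final-size} followed by the handshake count for cubic graphs.
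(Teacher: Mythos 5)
Your proposal is correct and matches the paper's (essentially implicit) argument: the paper itself dismisses the corollary as obvious once one notes $k = 4e$, which combined with Lemma \ref{lem-final-size} gives the $100e$ vertex bound, and the $150e$ edge bound follows from the handshake lemma in a cubic graph exactly as you write. You have simply spelled out the bookkeeping the paper leaves to the reader, with no difference in approach.
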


It should be noted that, for undirected graphs with moderate vertex degree (say, smaller than 50), it may be cheaper to use $s$-gates rather than the Quick HCP to 3HCP Conversion Procedure, despite the quadratic growth inherent in the former choice. This is partially due to the unnecessary conversion to a directed graph and then back again, after which the number of vertices is tripled. Even for directed graphs, if the vertex order is low (say, smaller than 15) it can still be more efficient to use $(s,r)$-gates. The interested reader should investigate both approaches for their problem to see which works better.

\section{Improvement over the existing approach}

We have implemented the Quick HCP to 3HCP Conversion Procedure using Java and have converted several famous graphs to cubic graphs using our approach. We now compare the corresponding instance input sizes with those of the cubically-growing conversion obtained by first converting from HCP to SAT, then to 3SAT, and finally to 3HCP. We convert to SAT using the Ariadne100 software package \cite{ariadne}, then to 3SAT using the conversion given in Karp \cite{karp}, and finally to 3HCP using the approach by Garey et al \cite{tarjan}. Note that, since we do not demand planarity, we may avoid the most expensive step in their approach. In fact, although it is not stated explicitly in \cite{tarjan}, it can be relatively easily checked that if the 3SAT instance has $m$ clauses and $n$ literals, the resultant cubic graph will contain $408m + 40n$ vertices. It is also interesting to note that Ariadne100 produces SAT instances for which the number of vertices bounds the size, rather than edges. Specifically, if a graph has $N$ vertices and $e$ undirected edges, the SAT instance has $2N^3 - 2N^2 + 2N - 2Nm$ and $N^2$ literals. Note that this implies that dense graphs will actually produce smaller SAT instances than more sparse graphs of the same number of vertices. This is in contrast to our result, where the size of the resultant graph instance grows proportionally to the number of edges present in the original graph instance. The comparative results are listed in Table \ref{tab-results}. In the first column of Table \ref{tab-results} we list the instances considered. In the second column we list the number of vertices in the graph. In the third column we list the maximum and average degree of the vertices in the graph. In the fourth column, we list the number of vertices in the resultant cubic graph after applying the Quick HCP to 3HCP Conversion Procedure. In the fifth column, we list the number of vertices in the resultant cubic graph after replacing all vertices with appropriate $s$-gates iteratively until a cubic graph is obtained. In the sixth column, we list the number of vertices in the resultant cubic graph after the alternative approach using the result of Garey et al. We have chosen twelve well-known undirected graphs to consider as instances of HCP, which we describe after Table \ref{tab-results}.

\begin{table}[h!]\hspace*{-0.35cm}\begin{tabular}{|c|c|c|c|c|c|}\hline
{\bf Graph} & {\bf Vertices} & {\bf (Max, mean)} & {\bf Vertices in} & {\bf Vertices in} & {\bf Vertices in}\\
& & {\bf Degree} & {\bf HCP to 3HCP} & {\bf $s$-gate} & {\bf Garey et al}\\
\hline
\hline
$\mbox{K}_{10}$  & 10 & (9,9) & 3,560 & 1,090 & 845,280 \\
\hline
Goldner-Harary  & 11 & (8,4.9091) & 1,594 & 336 & 1,655,720 \\
\hline
Sousselier  & 16 & (5,3.375) & 932 & 96 & 6,044,160 \\
\hline
6-Andr\'{a}sfai  & 17 & (6, 6) & 3,502 & 782 & 6,670,664 \\
\hline
24-cell  & 24 & (8,8) & 7,344 & 2,064 & 19,230,336 \\
\hline
29-Paley  & 29 & (14,14) & 17,574 & 7,366 & 30,968,520 \\
\hline
Foster Cage  & 30 & (5,5) & 4,680 & 870 & 41,617,440 \\
\hline
Sheehan-40  & 40 & (39,20.05) & 36,316 & 24,784 & 80,791,040 \\
\hline
Sims-Gewirtz  & 56 & (10,10) & 22,736 & 7,504 & 271,272,064 \\
\hline
8x8 Knight's Tour  & 64 & (8,5.25) & 10,592 & 2,416 & 427,084,800 \\
\hline
Sheehan-80  & 80 & (79,40.025) & 152,556 & 186,324 & 652,154,880 \\
\hline
$\mbox{K}_{100}$  & 100 & (99,99) & 485,600 & 1,036,900 & 856,612,800 \\
\hline\end{tabular}
\caption{Comparative sizes of cubic graphs obtained from the conversions of some famous higher-degree graphs. The HCP to 3HCP conversion and $s$-gate conversion are those described in this manuscript. The Garey et al conversion is that which uses (in the final step) the result by Garey et al \cite{tarjan}.\label{tab-results}}\end{table}

Table \ref{tab-results} indicates very clearly the savings that may be obtained from using the approaches described in this manuscript. Note here that the examples with smaller degree actually result in the quadratically-growing conversion outperforming the linearly-growing conversion. For the examples of larger degree, however, the linearly-growing conversion is superior. Both approaches dramatically outperform the approach of first converting to SAT, then to 3SAT, and finally back to 3HCP. The twelve famous graph instances considered are:

\begin{itemize}\item The complete graph on 10 vertices, $\mbox{K}_{10}$.
\item Goldner-Harary Graph \cite{goldner}, the smallest non-Hamiltonian maximal planar graph.
\item Sousselier Graph, one of four hypohamiltonian graph on 16 vertices (see Aldred et al \cite{aldred}).
\item 6-Andr\'{a}sfai Graph \cite{godsil}, a circulant graph in which each vertex $i$ is adjacent to vertices $i + j$ for all $j$ congruent to $1\mod 3$.
\item 24-cell, also known as the icositetrachoron, the unique self-dual Euclidian polytope that is neither a polygon nor a simplex (see Coxeter \cite{coxeter}).
\item Paley Graph \cite{erdos} on 29 vertices, one of an infinite family of conference graphs.
\item Foster Cage, one of four (5,5)-cages (see Meringer \cite{meringer}, or Wong \cite{wong} who listed the Foster cage but missed one of the other (5,5)-cages).
\item Sheehan-40 Graph, one of an infinite family of maximal uniquely Hamiltonian graphs (see Sheehan \cite{sheehan}).
\item Sims-Gewirtz Graph \cite{gewirtz}, an integral 10--regular graph with the unique graph spectrum $10^12^{35}(-4)^{20}$.
\item 8x8 Knight's Tour Graph, in which each vertex corresponds to a square on the chessboard, and with edges present only when one square can be reached from another by the move of a knight (see Conrad et al \cite{conrad}).
\item Sheehan-80 Graph, another of the infinite family described by Sheehan \cite{sheehan}.
\item The complete graph on 100 vertices, $\mbox{K}_{100}$.
\end{itemize}

It should be noted that the intention of Table \ref{tab-results} is not to disparage the result in Garey et al, but rather to indicate the folly of switching problem frameworks unnecessarily, or alternatively, the desirability of developing problem-specific conversions rather than relying on those that exist via other NP-complete problems. We conclude this manuscript with a short discussion on this topic.

\section*{Final Thoughts and Future Work}

In the years since the Cook-Levin theorem \cite{cook} was published, and Karp \cite{karp} listed 21 NP-complete problems with SAT as a base, it has become commonplace for researchers to think of SAT as the best choice of problem to use for conversions. Indeed, other than perhaps 0-1 integer programming, the majority of algorithmic development for solving NP-complete problems appears to have been focused on SAT. However, there is nothing inherently special about SAT to warrant this special consideration among other NP-complete problems.

Conversely, there has been relatively little attention paid to the degree of the polynomial that arises in conversions of one NP-complete problem to another. As long as the conversion is polynomially-growing, it is deemed to be efficient. However, in practice, even if a polynomial-time algorithm could be found for some special NP-complete problem, other problems of real-world size would still be intractable if they first require a conversion that grows the input size by a polynomial of even moderate degree (even two or three). Even extremely fast heuristics are practically useless for real-world sized problems if the input size has been blown up by anything larger than linear growth. Perhaps the first specific consideration of linearly-growing conversions was by Dewdney \cite{dewdney} who showed that six problems have linearly-growing conversions to SAT and from SAT. Creignou \cite{creignou} extended this idea by developing the concept of {\em SAT-easy} and {\em SAT-hard} problems. She described SAT-easy problems as those that are reducible to Satisfiability with linear growth in the input size, and SAT-hard problems as those that may be obtained by a reduction from Satisfiability with linear growth in the input size. If a problem is both SAT-easy and SAT-hard, Creignou calls it {\em SAT-equivalent}. In her paper, Creignou described 14 NP-complete problems which are SAT-equivalent, but identified only two NP-complete problems which are SAT-hard but not SAT-easy, one of which is HCP (the other is the disjoint connecting paths problem). Creignou also suggested that the 3-colorability problem (3COL) might be the most natural NP-complete problem, and provided a methodology of constructing reductions to 3COL for many NP-complete problems.

Motivated by the result of the present manuscript, we hope to investigate the equivalent notion of {\em HCP-easy}, {\em HCP-hard} and {\em HCP-equivalent} problems. Certainly it is clear, from the result in \cite{creignou}, that any problem which is SAT-easy must also be HCP-easy, however the converse is not known to be true. Likewise, a problem which is HCP-hard must also be SAT-hard. The result in the present manuscript indicates that 3HCP is HCP-equivalent but is not known to be SAT-equivalent. The study of which problems are HCP-equivalent but not SAT-equivalent, and vice versa, is a topic for future research.

Research is also warranted on the question of which problems may be converted, with only linear growth, directly to HCP, rather than via SAT. Although there are many NP-complete problems known to have linearly-growing conversions to SAT, and therefore to HCP, it seems inefficient to convert via another problem if it is not necessary to do so. A recent example of such research \cite{setsplitting} demonstrated that the Set Splitting problem, long known to have a linearly-growing conversion to SAT, also has a linearly-growing conversion directly to HCP with very small coefficients.

Ultimately, it seems natural to try to identify what might be thought of as a {\em minimal set} of NP-complete problems, being the smallest possible set such that every other NP-complete problem may be converted to one in the minimal set with only a linear growth in the input size.  Determining which problems may lie in this minimal set seems a fascinating topic which, to date, does not appear to have been considered at all. With computing power having improved to the stage where clever heuristics are now a viable method of solving many real-world sized NP-complete problem instances, the time seems right for such exploration to occur.

\section*{Acknowledgments}

The authors gratefully acknowledge useful conversations with Jerzy Filar and Pouya Baniasadi. The research presented in this manuscript was supported in part by the DSTO, and the ARC Discovery Grant DP120100532.

\end{document}